\newtheorem{proposition}{Proposition}
\newtheorem{corollary}{Corollary}
\newtheorem{theorem}{Theorem}
\newcommand{\R}{\mathbb R}
\newcommand{\cc}{\mathbf c}
\title{The Fisher-Rao geometry of beta distributions applied to the study of canonical moments}
\author{Alice Le Brigant and St\' ephane Puechmorel}
\date{}
\begin{document}

\maketitle

\begin{abstract}
This paper studies the Fisher-Rao geometry on the parameter space of beta distributions. We derive the geodesic equations and the sectional curvature, and prove that it is negative. This leads to uniqueness for the Riemannian centroid in that space. We use this Riemannian structure to study canonical moments, an intrinsic representation of the moments of a probability distribution. Drawing on the fact that a uniform distribution in the regular moment space corresponds to a product of beta distributions in the canonical moment space, we propose a mapping from the space of canonical moments to the product beta manifold, allowing us to use the Fisher-Rao geometry of beta distributions to compare and analyze canonical moments.
\end{abstract}

\section{Introduction}

The differential geometric approach to probability theory and statistics has met increasing interest in the past years, from the theoretical point of view as well as in applications. In this approach, probability distributions are seen as elements of a differentiable manifold, on which a metric structure is defined through the choice of a Riemannian metric. Two very important ones are the Wasserstein metric, central in optimal transport, and the Fisher-Rao metric (also called Fisher information metric), essential in information geometry. Unlike optimal transport, information geometry is foremost concerned with parametric families of probability distributions, and defines a Riemannian structure on the parameter space using the Fisher information matrix \cite{fisher1922}. In parameter estimation, the Fisher information can be interpreted as the quantity of information on the unknown parameter contained in the model. As the Hessian of the well-known Kullback-Leibler divergence, it measures through the notion of curvature the capacity to distinguish between two different values of the parameter. Rao \cite{rao1992} showed that it could be used to locally define a scalar product on the space of parameters, interpretable as a Riemannian metric. An important feature of this metric is that it is invariant under any diffeomorphic change of parameterization. In fact, considering the infinite-dimensional space of probability densities on a given manifold $M$, there is a unique metric, which also goes by the name Fisher-Rao, that is invariant with respect to the action of the diffeomorphism group of $M$ \cite{cencov2000,bauer2016}. This metric induces the regular Fisher information metric on the finite dimensional submanifolds corresponding to the parameterized statistical models of interest in information geometry. Arguably the most famous example of Fisher-Rao geometry of a statistical model is that of the Gaussian model, which is hyperbolic. The multivariate Gaussian case, among other models, has also received a lot of attention \cite{atkinson1981,skovgaard1984}. 

In this work, we are interested in beta distributions, a family of probability measures on $[0,1]$ used to model random variables defined on a compact interval in a wide variety of applications. Up to our knowledge, the information geometry of beta distributions has not yet received much attention. In this paper, we give new results and properties for this geometry, and its curvature in particular. Interestingly, this geometric framework yields new by-product tools to study the set of all moments of compactly supported probability measures on the real line. This is achieved through the so-called canonical moments representation \cite{dette1997}, an alternative to the usual moment representation of a probability distribution that satisfies interesting symmetries and invariance properties.

The paper is organized as follows. Section \ref{sec:beta} deals with the study of the Fisher-Rao geometry of beta distributions. We derive the geodesic equations, prove that sectional curvature is negative, give some bounds and observe a geometrical manifestation of the central limit theorem. Section \ref{sec:moments} deals with the application to canonical moments. After a brief presentation of these objets, we propose a representation in the product beta manifold, allowing us to use the Fisher-Rao geometry of beta distributions to compare and analyze canonical moments.

\section{Geometry of the beta manifold}\label{sec:beta}

\subsection{The beta manifold}

Information geometry is concerned with parametric families of probability distributions, i.e. sets of distributions with densities with respect to a common dominant measure $\mu$ parameterized by a parameter $\theta$ member of a given set $\Theta$. That is, a collection of measures of the kind
\begin{equation*}
\mathcal P_\Theta=\{p_\theta \mu, \theta\in \Theta\}.
\end{equation*}
We assume that $\Theta$ is a non empty open subset of $\R^d$. Associated to any such family is the Fisher information matrix, defined for all $\theta$ as
\begin{equation*}
I(\theta) = \left[E\left( \frac{\partial^2}{\partial \theta_i\partial\theta_j} \ln p(X;\theta)\right)\right]_{1\leq i,j\leq d}.
\end{equation*}
As an open subset of $\R^d$, $\Theta$ is a differentiable manifold and can be equipped with a Riemannian metric using this quantity. This gives the Fisher information metric on the parameter space $\Theta$
\begin{equation*}
G^{F}_\theta(u,v) = u^t I(\theta)v, \qquad \theta\in\Theta, \quad u,v\in T_\theta\Theta \simeq \R^d,
\end{equation*}
where $u^t$ denotes the transpose of the vector $u$. By extension, we talk of the Fisher geometry of the parameterized family $\mathcal P_\Theta$, and of the Riemannian manifold $(\mathcal P_\Theta, g^F)$.

In this paper, we are interested in beta distributions, a family of probability distributions on $[0,1]$ with density with respect to the Lebesgue measure parameterized by two positive scalars $\alpha, \beta >0$
\begin{equation*}
p_{\alpha,\beta}(x) = \frac{\Gamma(\alpha+\beta)}{\Gamma(\alpha)\Gamma(\beta)} x^{\alpha-1}(1-x)^{\beta-1}, \quad x\in[0,1].
\end{equation*}
We consider the Riemannian manifold composed of the parameter space $\Theta=\R_+^*\times\R_+^*$ and the Fisher metric $g^F$, and by extension denote by beta manifold the pair $(\mathcal B, g^F)$, where $\mathcal B$ is the family of beta distributions
\begin{equation*}
\mathcal B = \{B(\alpha,\beta)=p_{\alpha,\beta}(\cdot) dx, \alpha>0, \beta>0\}.
\end{equation*}
Here $dx$ denotes the Lebesgue measure on $[0,1]$. The distance between two beta distributions is then defined as the geodesic distance associated to the Fisher metric in the parameter space
\begin{equation*}
d^{F}(B(\alpha,\beta),B(\alpha',\beta')) = \inf_{\gamma} \int_0^1 \sqrt{g^F(\dot\gamma(t),\dot\gamma(t))}dt,
\end{equation*}
where the infimum is taken over all paths $\gamma:[0,1]\rightarrow \Theta$ such that $\gamma(0)=(\alpha,\beta)$ and $\gamma(1)=(\alpha',\beta')$.

\subsection{The Fisher-Rao metric} 

The beta distributions are part of an exponential family and so the general term of the Fisher-Rao metric depends on second order derivatives of the underlying potential function. Denoting by $g^F(\alpha,\beta)$ the matrix form of $G^F_{\alpha,\beta}$,
\begin{equation}
\label{potential}
g^{F}(\alpha,\beta)= - \text{Hess}\,\varphi(\alpha, \beta),
\end{equation}
where $\varphi$ is the potential function
\begin{equation*}
\varphi (\alpha, \beta) = \ln \Gamma(\alpha) + \ln \Gamma(\beta) - \ln\Gamma(\alpha+\beta).
\end{equation*}
Proposition \ref{prop:metric} describes the metric tensor and Proposition \ref{prop:geod} the geodesic equations. 
\begin{proposition}\label{prop:metric}
The matrix representation of the Fisher-Rao metric on the space of beta distributions is given by
\begin{equation*}
g^F(\alpha,\beta)=\left[\begin{matrix} 
\psi'(\alpha) -\psi'(\alpha+\beta) & -\psi'(\alpha+\beta) \\
-\psi'(\alpha+\beta) & \psi'(\beta) -\psi'(\alpha+\beta)
\end{matrix}\right]
\end{equation*}
where $\psi$ denotes the digamma function, i.e. $\psi(x) = \frac{d}{dx}\ln\Gamma(x)$.
\end{proposition}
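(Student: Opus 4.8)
The claim reduces to a short differentiation once the exponential-family structure is used. Writing the log-density as
\[
\ln p_{\alpha,\beta}(x) = -\varphi(\alpha,\beta) + (\alpha-1)\ln x + (\beta-1)\ln(1-x),
\]
the two $x$-dependent terms are affine in the parameters $(\alpha,\beta)$, so their second-order partials with respect to $\alpha$ and $\beta$ vanish identically. Hence $\partial^2_{ij}\ln p_{\alpha,\beta}(x) = -\partial^2_{ij}\varphi(\alpha,\beta)$ is already independent of $x$, the expectation in the Fisher information is vacuous, and $g^F(\alpha,\beta)$ coincides (up to the sign fixed by \eqref{potential}) with the Hessian of $\varphi$. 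It therefore suffices to compute the four second-order partial derivatives of $\varphi$.

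Differentiating $\varphi(\alpha,\beta)=\ln\Gamma(\alpha)+\ln\Gamma(\beta)-\ln\Gamma(\alpha+\beta)$ with $\psi=(\ln\Gamma)'$ gives the gradient $\partial_\alpha\varphi=\psi(\alpha)-\psi(\alpha+\beta)$ and $\partial_\beta\varphi=\psi(\beta)-\psi(\alpha+\beta)$; differentiating once more and applying the chain rule to the $\psi(\alpha+\beta)$ terms yields
\[
\partial^2_{\alpha\alpha}\varphi=\psi'(\alpha)-\psi'(\alpha+\beta),\qquad
\partial^2_{\beta\beta}\varphi=\psi'(\beta)-\psi'(\alpha+\beta),\qquad
\partial^2_{\alpha\beta}\varphi=\partial^2_{\beta\alpha}\varphi=-\psi'(\alpha+\beta).
\]
Collecting these four entries into the symmetric $2\times2$ matrix gives the announced expression for $g^F(\alpha,\beta)$.

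I expect no genuine obstacle: the proposition is essentially a calculus exercise once \eqref{potential} is in hand. The only points requiring care are bookkeeping ones — keeping the sign conventions straight between the Fisher information, the expected log-Hessian and $\text{Hess}\,\varphi$, and correctly propagating the chain-rule factor through $\ln\Gamma(\alpha+\beta)$, which is exactly what makes the common term $-\psi'(\alpha+\beta)$ appear in all three distinct entries. As a consistency check the resulting matrix is symmetric positive definite on $\R_+^*\times\R_+^*$: this is automatic since it is the Fisher information matrix of a minimal exponential family, and the diagonal entries are manifestly positive because $\psi'$ is strictly decreasing.
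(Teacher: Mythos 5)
Your proof is correct and follows essentially the same route the paper intends: exploit the exponential-family form so that the Fisher information reduces to the Hessian of the potential $\varphi$, then compute its four second-order partials, which is exactly the ``straightforward computation'' the paper leaves implicit. Your parenthetical ``up to the sign fixed by \eqref{potential}'' quietly absorbs a sign typo in the paper (its definition of $I(\theta)$ omits the usual minus sign, so \eqref{potential} as written contradicts the positive-definite matrix of the proposition); your final matrix, equal to $+\mathrm{Hess}\,\varphi$, is the correct one.
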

\begin{proof}
This follows from straightforward computations.
\end{proof}
\begin{proposition}\label{prop:geod}
The geodesic equations are given by
\begin{align*}
\ddot\alpha + a(\alpha,\beta) \dot\alpha^2 + b(\alpha,\beta)\dot\alpha\dot\beta + c(\alpha,\beta)\dot\beta^2 = 0,\\
\ddot\beta + a(\beta,\alpha) \dot\beta^2 + b(\beta,\alpha)\dot\alpha\dot\beta + c(\beta,\alpha)\dot\alpha^2 = 0,
\end{align*}
where
\begin{align*}
a(x,y) &=\frac{1}{2d(x,y)}(\psi''(x)\psi'(y) - \psi''(x)\psi'(x+y) - \psi'(y)\psi''(x+y)),\\
b(x,y) &=-\frac{1}{d(x,y)}\psi'(y)\psi''(x+y),\\
c(x,y) &=\frac{1}{2d(x,y)}(\psi''(y)\psi'(x+y) - \psi'(y)\psi''(x+y)),\\
d(x,y) &= \psi'(x)\psi'(y) - \psi'(x+y)(\psi'(x)+\psi'(y)).
\end{align*}
\end{proposition}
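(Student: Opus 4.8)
The plan is to compute the Christoffel symbols of the second kind directly from the metric tensor given in Proposition \ref{prop:metric}, using the standard formula
\begin{equation*}
\Gamma^k_{ij} = \tfrac12 \sum_\ell g^{k\ell}\left(\partial_i g_{j\ell} + \partial_j g_{i\ell} - \partial_\ell g_{ij}\right),
\end{equation*}
and then read off the geodesic equations $\ddot\theta^k + \Gamma^k_{ij}\dot\theta^i\dot\theta^j = 0$. Writing $\theta^1=\alpha$, $\theta^2=\beta$, the first step is to record the partial derivatives of the four entries of $g^F$. Since each entry is built from $\psi'(\alpha)$, $\psi'(\beta)$ and $\psi'(\alpha+\beta)$, differentiating produces only $\psi''(\alpha)$, $\psi''(\beta)$ and $\psi''(\alpha+\beta)$; for instance $\partial_\alpha g_{11} = \psi''(\alpha)-\psi''(\alpha+\beta)$, $\partial_\beta g_{11} = -\psi''(\alpha+\beta)$, and similarly for the other entries. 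The second step is to invert the $2\times2$ matrix: its determinant is exactly $d(\alpha,\beta) = \psi'(\alpha)\psi'(\beta) - \psi'(\alpha+\beta)(\psi'(\alpha)+\psi'(\beta))$ (this is the quantity $d(x,y)$ in the statement), so $g^{11} = (\psi'(\beta)-\psi'(\alpha+\beta))/d$, $g^{22} = (\psi'(\alpha)-\psi'(\alpha+\beta))/d$ and $g^{12}=g^{21}=\psi'(\alpha+\beta)/d$.

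The third step is the bookkeeping: assemble $\Gamma^1_{11}$, $\Gamma^1_{12}$, $\Gamma^1_{22}$ (which give $a(\alpha,\beta)$, $\tfrac12 b(\alpha,\beta)$, $c(\alpha,\beta)$ respectively, after accounting for the fact that the cross term $\dot\alpha\dot\beta$ collects both $\Gamma^1_{12}$ and $\Gamma^1_{21}$) and then the three symbols $\Gamma^2_{22}$, $\Gamma^2_{12}$, $\Gamma^2_{11}$ for the second equation. The symmetry of the problem under exchanging the two coordinates — the metric is invariant under the simultaneous swap $\alpha\leftrightarrow\beta$ together with the obvious relabelling — is what forces the second geodesic equation to be obtained from the first by the substitution $(x,y)\mapsto(y,x)$ in $a,b,c$, which is why the statement only defines three functions rather than six. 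I would verify this symmetry once at the level of the metric so that only one of the two equations needs to be computed in full.

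The one genuinely delicate point is the algebraic simplification that turns the raw combinations $\tfrac12 g^{k\ell}(\partial_i g_{j\ell}+\partial_j g_{i\ell}-\partial_\ell g_{ij})$ into the compact three-term expressions for $a$, $b$, $c$. Each Christoffel symbol is a priori a sum of several products of $\psi'$ and $\psi''$ values divided by $d$, and a number of cancellations must occur — in particular the $b$ coefficient collapses to the single term $-\psi'(y)\psi''(x+y)/d(x,y)$, and in $a$ and $c$ the $\psi''(\alpha+\beta)$-contributions must combine so that only the stated terms survive. The cleanest way to organize this is to substitute the shorthand $p=\psi'(\alpha)$, $q=\psi'(\beta)$, $r=\psi'(\alpha+\beta)$, $p'=\psi''(\alpha)$, $q'=\psi''(\beta)$, $r'=\psi''(\alpha+\beta)$, expand everything over the common denominator $d = pq - r(p+q)$, and track the coefficient of each monomial; the identities $\partial_\alpha r = \partial_\beta r = r'$ are what make the three-term structure emerge. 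Once the coefficient of $\dot\alpha^2$ in the first equation is shown to equal $a(\alpha,\beta)$, the coefficient of $\dot\beta^2$ equals $c(\alpha,\beta)$, and the coefficient of $\dot\alpha\dot\beta$ equals $b(\alpha,\beta)$, the second equation follows by symmetry and the proof is complete. I expect this simplification, not the setup, to be the main obstacle, though it is entirely mechanical.
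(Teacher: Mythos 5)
Your proposal is correct and follows essentially the same route as the paper: compute the Christoffel symbols from the metric of Proposition \ref{prop:metric}, contract with the inverse metric (whose determinant is $d(\alpha,\beta)$), and read off the geodesic equations, the second one following from the $\alpha\leftrightarrow\beta$ symmetry. The only difference is that the paper sidesteps the simplification you flag as the main obstacle by using the Hessian structure of the Fisher metric, taking $\Gamma_{ijk}=\tfrac12\varphi_{ijk}$ with $\varphi$ the potential, which is exactly what your general Levi-Civita formula collapses to once one notes that $\partial_i g_{jk}=\varphi_{ijk}$ is totally symmetric in $(i,j,k)$.
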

\begin{proof}
The geodesic equations are given by
\begin{equation}
\label{geodeq}
\begin{aligned}
\ddot \alpha + \Gamma_{\alpha\alpha}^\alpha\dot \alpha^2 + 2\Gamma_{\alpha\beta}^\alpha\dot \alpha\dot\beta + \Gamma_{\beta\beta}^\alpha \dot \beta^2=0\\
\ddot \beta + \Gamma_{\alpha\alpha}^\beta\dot \alpha^2 + 2\Gamma_{\alpha\beta}^\beta\dot \alpha\dot\beta + \Gamma_{\beta\beta}^\beta \dot\beta^2=0
\end{aligned}
\end{equation}
where the $\Gamma_{ij}^k$'s denote the Christoffel symbols of the second kind. These can be obtained from the Christoffel symbols of the first kind $\Gamma_{ij}^k$ and the coefficients $g^{ij}$ of the inverse of the metric matrix
\begin{equation*}
\Gamma_{ij}^k = \Gamma_{ijl} g^{kl}.
\end{equation*}
Here we have used the Einstein summation convention. Since the Fisher metric is a Hessian metric, the Christoffel symbols of the first kind can be obtained as
\begin{equation*}
\Gamma_{ijk} = \frac{1}{2}\varphi_{ijk},
\end{equation*}
where $\varphi$ is the potential function \eqref{potential}. Straightforward computation yields the desired equations.
\end{proof}
Notice that when $\alpha=\beta=\gamma$, both geodesic equations \eqref{geodeq} yield a unique ordinary differential equation
\begin{equation*}
\ddot \gamma + (a(\gamma,\gamma)+b(\gamma,\gamma)+c(\gamma,\gamma)) \dot\gamma^2 = 0.
\end{equation*}
The line of equation $\alpha=\beta$ is therefore a geodesic for the Fisher metric. More precisely, we have the following corollary obtained directly from Proposition \ref{prop:geod}.
\begin{corollary}
The line of equation $\alpha(t)=\beta(t)=\gamma(t)$, where
\begin{equation*}
\ddot\gamma + \frac{\psi'(\gamma)\psi''(\gamma) - 4\psi'(\gamma)\psi''(2\gamma)}{2(\psi'(\gamma)^2 - 2\psi'(\gamma)\psi'(2\gamma))} \dot\gamma^2 = 0,
\end{equation*}
is a geodesic for the Fisher metric.
\end{corollary}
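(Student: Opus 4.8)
The plan is to specialize the geodesic system of Proposition \ref{prop:geod} to the diagonal $\{\alpha=\beta\}$ and then read off the coefficient. As observed just before the statement, setting $\alpha(t)=\beta(t)=\gamma(t)$ collapses the two second-order equations into a single one: the first becomes $\ddot\gamma+(a(\gamma,\gamma)+b(\gamma,\gamma)+c(\gamma,\gamma))\,\dot\gamma^2=0$, and the second, once $a(\beta,\alpha),b(\beta,\alpha),c(\beta,\alpha)$ are evaluated at $\alpha=\beta=\gamma$, reduces to exactly the same equation because all three products $\dot\alpha^2,\dot\alpha\dot\beta,\dot\beta^2$ become $\dot\gamma^2$. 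Hence any solution $\gamma$ of this ODE produces a curve $t\mapsto(\gamma(t),\gamma(t))$ solving the full geodesic system, so the diagonal is (totally) geodesic with the stated parametrization. It then remains only to identify $a(\gamma,\gamma)+b(\gamma,\gamma)+c(\gamma,\gamma)$ with the fraction in the statement.

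For this I would first note that the common denominator $d(\gamma,\gamma)=\psi'(\gamma)^2-2\psi'(\gamma)\psi'(2\gamma)$ is exactly half of the denominator appearing in the statement; incidentally $d(x,y)=\det g^F(x,y)$, which is positive by positive-definiteness of the Fisher metric, so the ODE is well posed. Then, substituting $x=y=\gamma$ into the formulas of Proposition \ref{prop:geod} gives $2d(\gamma,\gamma)\,a(\gamma,\gamma)=\psi''(\gamma)\psi'(\gamma)-\psi''(\gamma)\psi'(2\gamma)-\psi'(\gamma)\psi''(2\gamma)$, $2d(\gamma,\gamma)\,b(\gamma,\gamma)=-2\psi'(\gamma)\psi''(2\gamma)$, and $2d(\gamma,\gamma)\,c(\gamma,\gamma)=\psi''(\gamma)\psi'(2\gamma)-\psi'(\gamma)\psi''(2\gamma)$. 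Adding the three lines, the terms $\mp\psi''(\gamma)\psi'(2\gamma)$ cancel and one is left with $2d(\gamma,\gamma)\,(a+b+c)(\gamma,\gamma)=\psi'(\gamma)\psi''(\gamma)-4\psi'(\gamma)\psi''(2\gamma)$, which is the numerator in the statement; dividing by $2d(\gamma,\gamma)$ closes the argument.

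There is no real obstacle: the result is a direct corollary of Proposition \ref{prop:geod}, and the only two points needing a little care are (i) verifying that the second geodesic equation degenerates to the same ODE as the first on the diagonal — immediate from the symmetric way $a,b,c$ enter the two equations — and (ii) the term cancellation in the sum $a+b+c$. I would present this as a short computational remark rather than a separately displayed derivation.
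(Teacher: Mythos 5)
Your proposal is correct and follows essentially the same route as the paper: specialize the geodesic system of Proposition \ref{prop:geod} to the diagonal $\alpha=\beta=\gamma$, observe both equations collapse to the single ODE $\ddot\gamma+(a+b+c)(\gamma,\gamma)\dot\gamma^2=0$, and verify that $a(\gamma,\gamma)+b(\gamma,\gamma)+c(\gamma,\gamma)$ equals the stated coefficient after the $\psi''(\gamma)\psi'(2\gamma)$ terms cancel. Your algebra checks out, so nothing further is needed.
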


\begin{figure}
\centering
\includegraphics[width=0.49\textwidth]{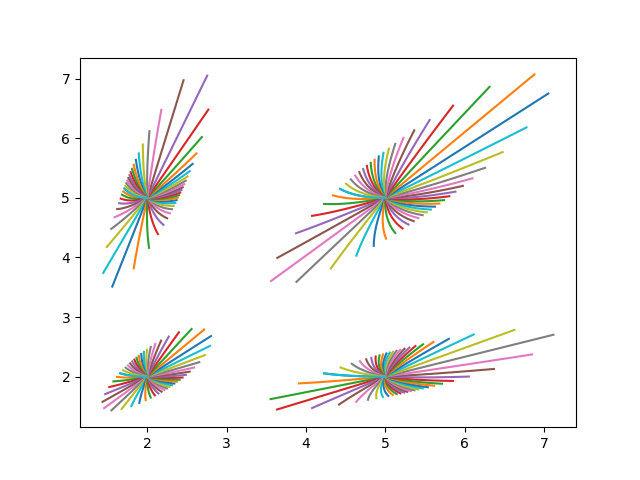}
\includegraphics[width=0.49\textwidth]{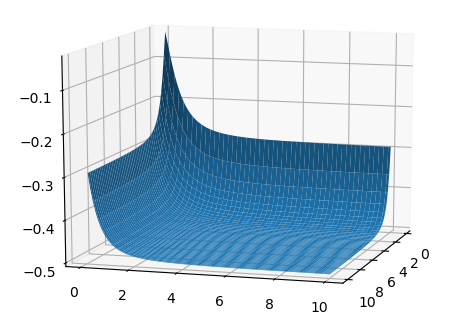}
\caption{Geodesic balls (left) and sectional curvature (right) of the beta manifold.}
\end{figure}

\subsection{Some properties of the polygamma functions}

In order to further study the geometry of the beta manifold, we will need a few technical results on the polygamma functions. The polygamma functions are the successive derivatives of the logarithm of the Euler Gamma function $\Gamma(x)$, i.e.
\begin{equation*}
\psi^{(m-1)}(x) := \frac{d^m}{dx^m}\ln \Gamma(x), \quad m\geq 1.
\end{equation*}
Their series representation is given by:
\begin{equation*}
\psi^{(m)} = (-1)^{m+1}m!\sum_{k\geq 0} \frac{1}{(k+x)^{m+1}}.
\end{equation*}
In the sequel, we are mostly interested by the first three, i.e. 
\begin{equation*}
\psi'(x) = \sum_{k\geq 0}\frac{1}{(k+x)^2}, \quad \psi''(x)=-2\sum_{k\geq 0}\frac{1}{(k+x)^3}, \quad \psi'''(x)=6\sum_{k\geq0}\frac{1}{(k+x)^4},
\end{equation*}
and we will use the following equivalents in the neighborhood of zero, given by the first term of their series
\begin{equation}
\label{equivzero}
\psi'(x) \underset{x\to 0}{\sim} \frac{1}{x^2}, \quad \psi''(x) \underset{x\to 0}{\sim} -\frac{2}{x^3}, \quad \psi'''(x) \underset{x\to0}{\sim}\frac{6}{x^4}.
\end{equation}
In the neighborhood of infinity, we will need the following expansions
\begin{equation}
\label{equivinfty}
\begin{aligned}
\psi(x) \underset{x\to+\infty}{=} \ln(x) -\frac{1}{2x}+ o\bigg(\frac{1}{x^2}\bigg),\\
\psi'(x) \underset{x\to+\infty}{=} \frac{1}{x} + \frac{1}{2x^2} + o\bigg(\frac{1}{x^2}\bigg),\\
\psi''(x) \underset{x\to+\infty}{=} -\frac{1}{x^2} - \frac{1}{x^3} + o\bigg(\frac{1}{x^3}\bigg).
\end{aligned}
\end{equation}

\subsection{Curvature of the Fisher-Rao metric}

In this section, we prove our main result, that is that the sectional curvature of the beta manifold is negative.
\begin{proposition}
The sectional curvature of the Fisher metric is given by:
\begin{equation*}
K(\alpha,\beta) = \frac{\psi''(\alpha)\psi''(\beta)\psi''(\alpha+\beta)}{4\,d(\alpha, \beta)^2}\bigg(\frac{\psi'(\alpha)}{\psi''(\alpha)} + \frac{\psi'(\beta)}{\psi''(\beta)} - \frac{\psi'(\alpha+\beta)}{\psi''(\alpha+\beta)}\bigg),
\end{equation*}
\end{proposition}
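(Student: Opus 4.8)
The plan is to compute the sectional curvature directly from the metric $g^F$ of Proposition \ref{prop:metric}, exploiting heavily the fact that this is a two-dimensional Hessian metric. In dimension two there is only one sectional curvature, equal to the Gaussian curvature, so it suffices to evaluate a single scalar; I would use the standard formula expressing $K$ in terms of the Christoffel symbols and their derivatives, namely $K = \langle R(e_1,e_2)e_2,e_1\rangle / (g_{11}g_{22}-g_{12}^2)$ with the Riemann tensor built from the $\Gamma_{ij}^k$ already assembled in the proof of Proposition \ref{prop:geod}. An alternative, probably cleaner, route is to use Brioschi's formula for the Gaussian curvature of a metric given in coordinates, which expresses $K$ purely in terms of $g_{ij}$ and their first and second partial derivatives; since the entries of $g^F$ are combinations of $\psi'$ evaluated at $\alpha$, $\beta$, $\alpha+\beta$, every needed derivative is just a combination of $\psi''$ and $\psi'''$ at those points.

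First I would record the ingredients: set $D = d(\alpha,\beta) = \det g^F(\alpha,\beta)$ (one checks this agrees with the $d$ of Proposition \ref{prop:geod}), write out $g^{ij}$ as $D^{-1}$ times the adjugate, and list the third derivatives of the potential $\varphi$, which by $\Gamma_{ijk} = \tfrac12\varphi_{ijk}$ are the only data entering the Christoffel symbols: $\varphi_{\alpha\alpha\alpha} = \psi''(\alpha) - \psi''(\alpha+\beta)$, $\varphi_{\alpha\alpha\beta} = \varphi_{\alpha\beta\beta} = -\psi''(\alpha+\beta)$, $\varphi_{\beta\beta\beta} = \psi''(\beta) - \psi''(\alpha+\beta)$. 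Then I would substitute into the curvature formula and simplify. The key structural feature to watch for is that many terms organize themselves around the combination $\psi''(\alpha)\psi''(\beta)\psi''(\alpha+\beta)$ and around the "ratios" $\psi'/\psi''$ at the three points; recognizing and factoring that product is what turns a long rational expression in $\psi',\psi''$ (and a priori $\psi'''$) into the stated compact form.

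The main obstacle I anticipate is precisely the cancellation of the $\psi'''$ terms. Brioschi's formula (or the Christoffel-based formula) produces second derivatives of the metric, hence terms involving $\psi'''(\alpha)$, $\psi'''(\beta)$, $\psi'''(\alpha+\beta)$; yet the final answer contains no $\psi'''$ at all. So a genuine algebraic miracle must occur, and verifying it is the substantive computational step — one has to check that the coefficient of each $\psi'''$ vanishes identically as a consequence of the specific additive structure $\varphi(\alpha,\beta) = \ln\Gamma(\alpha)+\ln\Gamma(\beta)-\ln\Gamma(\alpha+\beta)$ (equivalently, that $\varphi_{\alpha\alpha\beta} = \varphi_{\alpha\beta\beta}$ and these equal $-\psi''(\alpha+\beta)$, with the "pure" third derivatives differing from the "mixed" ones only by $\psi''(\alpha)$ resp. $\psi''(\beta)$). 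I would carry this out either by hand with careful bookkeeping or with a symbolic-algebra check, treating $\psi'(\alpha),\psi'(\beta),\psi'(\alpha+\beta),\psi''(\alpha),\dots$ as independent indeterminates. Once the $\psi'''$ terms are seen to drop, collecting the remainder over the common denominator $4D^2$ and factoring out $\psi''(\alpha)\psi''(\beta)\psi''(\alpha+\beta)$ yields the displayed expression; the sign discussion (negativity of $K$) is then deferred to the subsequent argument using the concavity/monotonicity properties of $\psi'/\psi''$ recorded in the polygamma subsection.
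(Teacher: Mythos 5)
Your proposal is correct and lands on essentially the same computation as the paper, with one organizational difference worth noting. The paper does not go through Brioschi's formula or derivatives of Christoffel symbols at all: it invokes the standard closed-form expression for the curvature of a Hessian metric, $K = R_{1212}/\bigl(4(\det g)^2\bigr)$ with $R_{1212}$ a quadratic expression in the third derivatives $\varphi_{ijk}$ alone, so the fourth derivatives of the potential (your $\psi'''$ terms) never enter the calculation in the first place. The ``algebraic miracle'' you anticipate is therefore real but automatic, and its source is not the specific additive structure $\varphi(\alpha,\beta)=\ln\Gamma(\alpha)+\ln\Gamma(\beta)-\ln\Gamma(\alpha+\beta)$ as you suggest: for \emph{any} Hessian metric $g_{ij}=\varphi_{ij}$ the terms $\partial_k\Gamma_{ijl}=\tfrac12\varphi_{ijkl}$ are totally symmetric, so they cancel in the antisymmetrized curvature (equivalently, in Brioschi's formula the combination $-\tfrac12 g_{11,22}+g_{12,12}-\tfrac12 g_{22,11}$ vanishes identically). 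Your plan of verifying the cancellation by treating the polygamma values as indeterminates would succeed, but it redoes in coordinates what the Hessian-metric formula already encodes; invoking that formula, as the paper does, removes the bookkeeping and reduces the proof to substituting $\varphi_{\alpha\alpha\alpha}=\psi''(\alpha)-\psi''(\alpha+\beta)$, $\varphi_{\alpha\alpha\beta}=\varphi_{\alpha\beta\beta}=-\psi''(\alpha+\beta)$, $\varphi_{\beta\beta\beta}=\psi''(\beta)-\psi''(\alpha+\beta)$, computing $\det g = d(\alpha,\beta)$, and factoring the numerator by $\psi''(\alpha)\psi''(\beta)\psi''(\alpha+\beta)$, exactly as in your final step.
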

\begin{proof}
The sectional curvature of a Hessian metric is given by
\begin{equation*}
K = \frac{1}{4(\det g)^2} R_{1212}
\end{equation*}
where
\begin{equation*}
R_{1212} = -\varphi_{\beta\beta} (\varphi_{\alpha\alpha\alpha}\varphi_{\alpha\beta\beta} - \varphi_{\alpha\alpha\beta}^2)
+ \varphi_{\alpha\beta} (\varphi_{\alpha\alpha\alpha}\varphi_{\beta\beta\beta} - \varphi_{\alpha\alpha\beta}\varphi_{\alpha\beta\beta})
+ \varphi_{\alpha\alpha} (\varphi_{\alpha\alpha\beta}\varphi_{\beta\beta\beta} - \varphi_{\alpha\beta\beta}^2).
\end{equation*}
Computing the partial derivatives of the potential function $\varphi$ gives
\begin{align*}
\varphi_{\alpha\alpha\alpha} &= \psi''(\alpha) - \psi''(\alpha+\beta),  \\
\varphi_{\beta\beta\beta} &= \psi''(\beta) - \psi''(\alpha + \beta), \\
\varphi_{\alpha\alpha\beta} &= \varphi_{\alpha\beta\beta} = -\psi''(\alpha+\beta),
\end{align*}
and the determinant of the metric is given by
\begin{equation*}
\det g(\alpha,\beta) = \psi'(\alpha)\psi'(\beta) - \psi'(\alpha+\beta)(\psi'(\alpha)+\psi'(\beta)).
\end{equation*}
This gives
\begin{equation*}
K = \frac{\psi''(\alpha+\beta)(\psi'(\alpha)\psi''(\beta)+\psi''(\alpha)\psi'(\beta)) - \psi'(\alpha+\beta)\psi''(\alpha)\psi''(\beta)}{4(d(\alpha,\beta))^2}.
\end{equation*}
Factorizing the numerator by $\psi''(\alpha)\psi''(\beta)\psi''(\alpha+\beta)$ yields the desired result.
\end{proof}

\begin{proposition}
\label{prop:asymptotic}
The asymptotic behavior of the sectional curvature is given by
\begin{align*}
\lim_{\beta\to 0}K(\alpha, \beta) &= \lim_{\beta\to 0}K(\beta, \alpha) = \frac{3}{4} - \frac{\psi'(\alpha)\psi'''(\alpha)}{2\, \psi''(\alpha)^2},\\
\lim_{\beta\to \infty} K(\alpha, \beta) &= \lim_{\beta\to \infty} K(\beta, \alpha) = \frac{\alpha\, \psi''(\alpha) + \psi'(\alpha)}{4(\alpha\,\psi'(\alpha) - 1)^2}.
\end{align*}
Moreover, we have the following limits
\begin{align*}
\lim_{\alpha,\beta\to 0} K(\alpha,\beta) = 0, \quad \lim_{\alpha, \beta\to \infty}K(\alpha,\beta) = -\frac{1}{2},\\
\lim_{\alpha\to 0, \beta\to \infty} K(\alpha, \beta)=\lim_{\alpha\to \infty, \beta\to 0} K(\alpha, \beta) = -\frac{1}{4}.
\end{align*}
\end{proposition}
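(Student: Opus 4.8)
The plan is to work from the factored form of the curvature established in the previous proposition, namely $K(\alpha,\beta) = P(\alpha,\beta)\,S(\alpha,\beta)$ with
\[
P(\alpha,\beta) = \frac{\psi''(\alpha)\psi''(\beta)\psi''(\alpha+\beta)}{4\,d(\alpha,\beta)^2}, \qquad S(\alpha,\beta) = h(\alpha)+h(\beta)-h(\alpha+\beta), \qquad h := \psi'/\psi'',
\]
where $d(\alpha,\beta) = \det g(\alpha,\beta) = \psi'(\alpha)\psi'(\beta) - \psi'(\alpha+\beta)(\psi'(\alpha)+\psi'(\beta)) > 0$, and to obtain each limit by inserting into $P$ and $S$ the relevant expansions. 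The expansions I would prepare are: near $0$, from the recurrences $\psi'(x) = x^{-2}+\psi'(x+1)$, $\psi''(x) = -2x^{-3}+\psi''(x+1)$ together with \eqref{equivzero}, $h(x) = -\frac{x}{2} - \frac{\psi'(1)}{2}x^3 + O(x^4)$, $\psi'(x) = x^{-2}(1+O(x^2))$, $\psi''(x) = -2x^{-3}(1+O(x^3))$; near $+\infty$, from \eqref{equivinfty}, $h(x) = -x+\frac12+o(1)$, $\psi'(x) = x^{-1}(1+o(1))$, $\psi''(x) = -x^{-2}(1+o(1))$. I will also use that $h$ is smooth on $(0,\infty)$ with $h'(x) = 1 - \psi'(x)\psi'''(x)/\psi''(x)^2$.

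For the two one-variable limits (with $\alpha$ fixed): if $\beta\to0$, Taylor expansion of $h$ at $\alpha$ gives $h(\alpha)-h(\alpha+\beta) = -\beta h'(\alpha)+O(\beta^2)$ and $h(\beta) = -\frac\beta2+o(\beta)$, hence $S = -\beta(h'(\alpha)+\frac12)(1+o(1))$; the term $\psi'(\beta)\sim\beta^{-2}$ dominates in $d$, yielding $d(\alpha,\beta) = -\psi''(\alpha)\beta^{-1}(1+o(1))$, while the numerator of $P$ equals $-2\psi''(\alpha)^2\beta^{-3}(1+o(1))$; multiplying, $P = -\frac{1}{2\beta}(1+o(1))$ and $K\to\frac12 h'(\alpha)+\frac14$, which is the stated value after substituting $h'(\alpha)$. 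If $\beta\to\infty$, then $h(\beta)-h(\alpha+\beta) = \alpha+o(1)$, so $S\to h(\alpha)+\alpha = (\psi'(\alpha)+\alpha\psi''(\alpha))/\psi''(\alpha)$; pushing the expansion of $d$ to order $\beta^{-2}$ gives $d(\alpha,\beta) = (\alpha\psi'(\alpha)-1)\beta^{-2}(1+o(1))$ and the numerator of $P$ is $\psi''(\alpha)\beta^{-4}(1+o(1))$, so $P\to\psi''(\alpha)/(4(\alpha\psi'(\alpha)-1)^2)$ and the product gives the second formula. The symmetry $K(\alpha,\beta) = K(\beta,\alpha)$, visible on the formula of the previous proposition, yields the companion statements with $\alpha$ and $\beta$ exchanged.

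For the four corner limits I keep the same factorization and let both variables move. As $\alpha,\beta\to0$: all arguments are in the near-$0$ regime, so $\psi''(\alpha)\psi''(\beta)\psi''(\alpha+\beta) = -8\,\alpha^{-3}\beta^{-3}(\alpha+\beta)^{-3}(1+o(1))$ and, after the cancellation $\alpha^{-2}\beta^{-2} - (\alpha+\beta)^{-2}(\alpha^{-2}+\beta^{-2}) = 2\,(\alpha\beta(\alpha+\beta)^2)^{-1}$, one gets $d(\alpha,\beta) = 2\,(\alpha\beta(\alpha+\beta)^2)^{-1}(1+o(1))$, hence $P = -\frac{\alpha+\beta}{2\alpha\beta}(1+o(1))$; on the other hand the linear part of $h$ cancels in $S$ and the cubic part leaves $S = \frac{3\psi'(1)}{2}\,\alpha\beta(\alpha+\beta)(1+o(1))$, so $K = -\frac{3\psi'(1)}{4}(\alpha+\beta)^2(1+o(1))\to0$. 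As $\alpha,\beta\to\infty$: the same bookkeeping gives $P\to-1$ and $S\to\frac12$ (the linear and constant parts of $h$ both cancel up to $o(1)$), so $K\to-\frac12$. As $\alpha\to0$ and $\beta\to\infty$ (hence $\alpha+\beta\to\infty$ too): mixing the two regimes, $d(\alpha,\beta) = (\alpha\beta^2)^{-1}(1+o(1))$ and $\psi''(\alpha)\psi''(\beta)\psi''(\alpha+\beta) = -2\,(\alpha^3\beta^4)^{-1}(1+o(1))$, so $P = -\frac{1}{2\alpha}(1+o(1))$, while $h(\alpha) = -\frac\alpha2+O(\alpha^3)$ and $h(\beta)-h(\alpha+\beta) = \alpha+o(\alpha)$ give $S = \frac\alpha2(1+o(1))$, whence $K\to-\frac14$; the last limit follows from the symmetry of $K$.

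The step I expect to be the main obstacle is making these corner limits genuinely rigorous rather than formal. In each of them the leading-order terms cancel — in $d(\alpha,\beta)$, in the numerator of $P$, and in $S$ — so every expansion must be carried one order beyond the naive one, and, more importantly, the remainders must be $o(1)$ relative to the surviving main term \emph{uniformly}, however the rates at which $\alpha$ and $\beta$ reach their limits compare. I would secure this by reducing, without loss of generality, to $\alpha\le\beta$ (so that $\alpha+\beta\asymp\beta$) and by estimating the telescoping differences such as $h(\beta)-h(\alpha+\beta)$, and the analogous differences buried in $d(\alpha,\beta)$, through a mean-value bound on the relevant derivative over $[\beta,\alpha+\beta]$ rather than bounding each term separately; this converts all the errors into the uniform form needed, and for instance yields $|S|\le C\alpha\beta(\alpha+\beta)$ and $|P|\le C'(\alpha+\beta)/(\alpha\beta)$, so that $|K|\le C''(\alpha+\beta)^2\to0$ in the first corner.
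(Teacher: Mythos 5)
Your proposal is correct: all six limits come out to the stated values, and I checked the key computational ingredients (the cancellation $\alpha^{-2}\beta^{-2}-(\alpha+\beta)^{-2}(\alpha^{-2}+\beta^{-2})=2/(\alpha\beta(\alpha+\beta)^2)$, the expansions $h(x)=-\tfrac{x}{2}-\tfrac{\psi'(1)}{2}x^3+O(x^4)$ near $0$ and $h(x)=-x+\tfrac12+o(1)$ near $\infty$, and the identity $\tfrac12 h'(\alpha)+\tfrac14=\tfrac34-\psi'(\alpha)\psi'''(\alpha)/(2\psi''(\alpha)^2)$). Your route differs from the paper's in two respects. First, you organize everything through the factored form $K=P\cdot S$ and the single function $h=\psi'/\psi''$, handling differences like $h(\alpha)-h(\alpha+\beta)$ by Taylor or mean-value arguments; the paper instead expands the unfactored numerator $N(\alpha,\beta)=\psi''(\alpha+\beta)(\psi''(\alpha)\psi'(\beta)+\psi'(\alpha)\psi''(\beta))-\psi'(\alpha+\beta)\psi''(\alpha)\psi''(\beta)$ together with $d(\alpha,\beta)$ -- the same kind of computation, just less structured. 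Second, and more substantially, for the four corner limits the paper simply takes iterated limits: it lets $\alpha\to 0$ or $\alpha\to\infty$ in the already-obtained one-variable limits $k_1(\alpha)$ and $k_2(\alpha)$, replacing the polygamma functions by their leading equivalents, a two-line step that never addresses the joint limit. You let both variables move simultaneously, correctly observe that the leading terms cancel in $d$, in the numerator and in $S$, and insist on uniform remainder control (reduction to $\alpha\le\beta$, mean-value bounds on telescoped differences such as $h(\beta)-h(\alpha+\beta)$ and $\psi'(\beta)-\psi'(\alpha+\beta)$). That buys a genuinely stronger conclusion -- true two-variable limits, with explicit rates such as $K\sim-\tfrac{3\psi'(1)}{4}(\alpha+\beta)^2$ near the origin -- at the cost of more delicate estimates, whereas the paper's iterated-limit computation is shorter and suffices for its later use (boundedness of $k_1$ and $k_2$). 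To make your version a complete proof you only need to carry out the uniform error estimates you sketch; they do go through exactly along the lines you indicate.
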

\begin{proof}
Let us fix $\alpha\in\R_+^*$, and denote $x=\beta$ the varying parameter of the beta distribution. 
The asymptotic behavior of the sectional curvature can be obtained by separately examining its numerator and the metric determinant appearing at the denominator
\begin{equation*}
K(\alpha,x) = \frac{N(\alpha,x)}{4d(\alpha,x)^2}.
\end{equation*}
Using a first order Taylor development of $\psi'$ in $\alpha$ and the equivalent \eqref{equivzero}, we deduce the following expansion for the determinant around zero
\begin{equation*}
d(\alpha,x) = \psi'_x(\psi'_\alpha-\psi'_{\alpha+x}) - \psi'_{a+x}\psi'_a \underset{x\to 0}{=} -\frac{\psi''_\alpha}{x} + o\bigg(\frac{1}{x}\bigg).
\end{equation*}
Similarly, writing the numerator of the sectional curvature as
\begin{align*}
N(\alpha, x) &:= \psi''_{\alpha+x}(\psi''_{\alpha}\psi'_{x} +\psi'_{\alpha}\psi''_{x}) - \psi'_{\alpha+x}\psi''_{\alpha}\psi''_{x}\\
&= (\psi''_{\alpha+x} - \psi''_{\alpha})(\psi''_{\alpha}\psi'_{x}+\psi'_{\alpha}\psi''_{x}) + (\psi'_a-\psi'_{a+x})\psi''_\alpha\psi''_x + \psi''_{\alpha}(\psi''_{\alpha}\psi'_{x}+\psi'_{\alpha}\psi''_{x}) - \psi'_a\psi''_\alpha\psi''_x,
\end{align*}
we get the following behavior around zero
\begin{align*}
N(\alpha,x)&\underset{x\to0}{=} x\psi'''_\alpha\bigg(\frac{\psi''_\alpha}{x^2} - \frac{2\psi'(\alpha)}{x^3}\bigg) + x(\psi''_\alpha)^2\frac{2}{x^3} + \psi''_\alpha\bigg(\frac{\psi''_\alpha}{x^2} - \frac{2\psi'_\alpha}{x^3}\bigg) + \frac{2\psi'_\alpha\psi''_\alpha}{x^3} + o\bigg(\frac{1}{x^2}\bigg)\\
&\underset{x\to0}{=} \frac{3(\psi''_\alpha)^2 - 2\psi'_\alpha\psi'''_\alpha}{x^2} + o\bigg(\frac{1}{x^2}\bigg).
\end{align*}
This yields the desired expression for the limit of $K(\alpha, x)$ as $x\to 0$. Now, in the neighborhood of infinity, the expansions \eqref{equivinfty} yield the following behavior for the determinant
\begin{align*}
d(\alpha, x) &= \psi'_\alpha\psi'_\beta - \psi'_{\alpha+\beta}(\psi'_\alpha+\psi'_\beta) \\
&\underset{x\to+\infty}{=} \psi'_\alpha\bigg(\frac{1}{x}+\frac{1}{2x^2}\bigg) - \bigg(\frac{1}{\alpha+x}+\frac{1}{2(\alpha+x)^2}\bigg)\bigg(\psi'_\alpha + \frac{1}{x}\bigg) + o\bigg(\frac{1}{x^2}\bigg)\\
&\underset{x\to+\infty}{=} \frac{\alpha\psi'_\alpha - 1}{x^2} + o\bigg(\frac{1}{x^2}\bigg),
\end{align*}
while an expansion of the numerator gives
\begin{align*}
N(\alpha,x) &\underset{x\to+\infty}{=} -\bigg(\frac{1}{(a+x)^2} + \frac{1}{(a+x)^3}\bigg)\bigg(\frac{\psi''_\alpha}{x} + \frac{\psi''_\alpha}{2x^2} - \frac{\psi'_\alpha}{x^2}\bigg) \\
&\hspace{8em}+ \bigg(\frac{1}{a+x} + \frac{1}{2(a+x)^2}\bigg)\psi''_\alpha\bigg(\frac{1}{x^2}+\frac{1}{x^4}\bigg) + o\bigg(\frac{1}{x^3}\bigg)\\
&\underset{x\to+\infty}{=} \frac{\alpha\psi''_\alpha + \psi'_\alpha}{x^4} + o\bigg(\frac{1}{x^4}\bigg),
\end{align*}
yielding again the desired limit for $K$. Finally, approximating $\psi'''_x$ by $6/x^4$ when $x\to0$, we get
\begin{align*}
\lim_{\alpha\to 0, \beta\to +\infty} K(\alpha, \beta) &= \lim_{\alpha\to0} \frac{\alpha\, \psi''_\alpha + \psi'_\alpha}{4(\alpha\,\psi'_\alpha - 1)^2} = \lim_{\alpha\to0}\frac{-1/\alpha^2}{4/\alpha^2} = -\frac{1}{4},\\
\lim_{\alpha,\beta\to +\infty} K(\alpha, \beta) = &\lim_{\alpha,\beta\to+\infty} \frac{-1/(2\alpha^2)}{1/\alpha^2}=-\frac{1}{2},\\
\lim_{\alpha,\beta\to 0} K(\alpha,\beta) =& \frac{3}{4} - \frac{1}{2} \lim_{\alpha,\beta\to 0} \frac{6/\alpha^6}{(-2/\alpha^3)^2} = 0,
\end{align*}
which completes the proof.
\end{proof}

We can now show the following property.
\begin{proposition}
The sectional curvature is negative and bounded from below.
\end{proposition}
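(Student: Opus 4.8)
The plan is to read off the sign of $K$ from the formula of the preceding proposition and then reduce negativity to a convexity property of a single special function. Since $\psi''(x)=-2\sum_{k\ge 0}(k+x)^{-3}<0$ for every $x>0$, and the determinant $d(\alpha,\beta)$ of the (positive definite) Fisher metric is strictly positive on $\R_+^*\times\R_+^*$, the prefactor $\psi''(\alpha)\psi''(\beta)\psi''(\alpha+\beta)/(4\,d(\alpha,\beta)^2)$ is strictly negative. Hence $K(\alpha,\beta)<0$ if and only if
\[
\frac{\psi'(\alpha)}{\psi''(\alpha)}+\frac{\psi'(\beta)}{\psi''(\beta)}-\frac{\psi'(\alpha+\beta)}{\psi''(\alpha+\beta)}>0,
\]
and, writing $f(x):=-\psi'(x)/\psi''(x)>0$, this condition is exactly the strict superadditivity $f(\alpha+\beta)>f(\alpha)+f(\beta)$.

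I would obtain this superadditivity from convexity of $f$ together with the boundary value $f(0)=0$. The limit is immediate from \eqref{equivzero}, since $f(x)\sim x/2$ as $x\to 0$; and a direct computation gives $f'(x)=-1+\psi'(x)\psi'''(x)/\psi''(x)^2$. Cauchy--Schwarz applied to the series of $\psi',\psi'',\psi'''$ gives $\psi'(x)\psi'''(x)/\psi''(x)^2\ge 3/2$, hence $f'\ge 1/2>0$ and $f$ is increasing; convexity then amounts to showing that $R(x):=\psi'(x)\psi'''(x)/\psi''(x)^2$ is non-decreasing. With $S_j(x):=\sum_{k\ge 0}(k+x)^{-j}$, so that $\psi'=S_2$, $\psi''=-2S_3$, $\psi'''=6S_4$ and $\psi^{(4)}=-24S_5$, differentiating $R$ reduces the statement $R'\ge 0$ to
\[
3\,S_2 S_4^2 \;\ge\; S_3^2 S_4 + 2\,S_2 S_3 S_5, \qquad\text{equivalently}\qquad 3\,\frac{S_4}{S_3}\;\ge\;\frac{S_3}{S_2}+2\,\frac{S_5}{S_4}.
\]
This inequality is where I expect the real work to lie: it is sharp, becoming an equality in the limits $x\to 0$ and $x\to+\infty$, and it fails for arbitrary positive sequences, so its proof must use the specific structure of the $S_j$, for instance through the representation $S_j(x)=\frac1{(j-1)!}\int_0^\infty t^{j-1}e^{-xt}/(1-e^{-t})\,dt$, whose underlying measure $e^{-xt}/(1-e^{-t})\,dt=\sum_{k\ge0}e^{-(x+k)t}\,dt$ is a superposition of exponentials along the arithmetic progression $x,x+1,\dots$, followed by a careful comparison of the corresponding moment integrals.

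Granting that $f$ is convex on $(0,\infty)$ and continuous up to $0$ with $f(0)=0$, superadditivity is the standard argument: $f(\alpha)=f\!\big(\tfrac{\alpha}{\alpha+\beta}(\alpha+\beta)+\tfrac{\beta}{\alpha+\beta}\cdot 0\big)\le\tfrac{\alpha}{\alpha+\beta}\,f(\alpha+\beta)$, and likewise $f(\beta)\le\tfrac{\beta}{\alpha+\beta}\,f(\alpha+\beta)$; adding these yields $f(\alpha)+f(\beta)\le f(\alpha+\beta)$, with strict inequality for $\alpha,\beta>0$ since $f$ is not affine. By the first paragraph this proves $K<0$ everywhere.

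It remains to bound $K$ from below. Away from the boundary there is nothing to do: $K=N/(4d^2)$ is smooth on $(0,\infty)^2$ because $d>0$ there, and in particular bounded on compact subsets. Near the boundary of the compactification $[0,+\infty]^2$, the Taylor expansions of the polygamma functions used in Proposition \ref{prop:asymptotic} show that $K$ has a finite limit at every boundary point: the four corner limits are $0$, $-1/2$, $-1/4$ and $-1/4$, and along each edge the limit of $K$ is an explicit continuous, bounded function of the remaining variable, consistent with the corner values. Hence $K$ extends continuously to the compact space $[0,+\infty]^2$, so it is bounded; combined with $K<0$ this proves that the sectional curvature is negative and bounded from below.
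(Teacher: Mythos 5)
Your reduction of negativity to the superadditivity of $f(x)=-\psi'(x)/\psi''(x)$ is exactly the paper's reduction (there phrased as sub-additivity of $\psi'/\psi''$), and your convexity-plus-$f(0)=0$ strategy would indeed yield it. But the proposal stops precisely at the analytic core: you reduce convexity of $f$ to the monotonicity of $R(x)=\psi'(x)\psi'''(x)/\psi''(x)^2$, equivalently to
\begin{equation*}
3\,S_2(x)S_4(x)^2 \;\ge\; S_3(x)^2S_4(x) + 2\,S_2(x)S_3(x)S_5(x),
\end{equation*}
and then explicitly acknowledge that you do not prove this inequality (``this is where I expect the real work to lie''). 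Since the log-convexity of $j\mapsto S_j$ goes in the unhelpful direction here ($S_3/S_2\le S_4/S_3\le S_5/S_4$), nothing elementary you have stated closes it, so the negativity claim is not established by your argument. The paper does not attempt this either: it invokes the sub-additivity of $\psi'/\psi''$ as Corollary 4 of a recent paper of Yang (cited as \cite{yang2017}), which is the legitimate way to discharge this step; as written, your proof has a genuine gap exactly where the paper leans on that external result.

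On boundedness from below, your idea is the same as the paper's (use the boundary limits of Proposition \ref{prop:asymptotic}), but the justification is thinner than you suggest: Proposition \ref{prop:asymptotic} gives limits of $K(\alpha,\beta)$ as $\beta\to 0$ or $\beta\to\infty$ for \emph{fixed} $\alpha$, together with corner limits, and passing from these iterated/directional limits to ``$K$ extends continuously to the compact square $[0,+\infty]^2$'' requires some uniformity in $\alpha$ near the edges that you do not verify. The paper instead bounds the edge functions $k_1(\alpha)=\lim_{\beta\to0}K(\alpha,\beta)$ and $k_2(\alpha)=\lim_{\beta\to\infty}K(\alpha,\beta)$ (continuous with finite limits at $0$ and $\infty$, hence bounded) and then controls $\inf_\alpha K(\alpha,\beta)$ as $\beta$ tends to the boundary; this also involves an interchange of limit and infimum, but if you keep your compactification route you should at least state and justify the uniform convergence (or equicontinuity) needed for the continuous extension, rather than assert it.
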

\begin{proof}
Recall that in its most factorized form, the sectional curvature is given by
\begin{equation*}
K(\alpha,\beta) = \frac{\psi''(\alpha)\psi''(\beta)\psi''(\alpha+\beta)}{4\,d(\alpha, \beta)^2}\bigg(\frac{\psi'(\alpha)}{\psi''(\alpha)} + \frac{\psi'(\beta)}{\psi''(\beta)} - \frac{\psi'(\alpha+\beta)}{\psi''(\alpha+\beta)}\bigg),
\end{equation*}
Since $\psi''$ is negative, the first factor is negative and so there remains to prove that the function $x\mapsto \frac{\psi'(x)}{\psi''(x)}$ is sub-additive, i.e.
\begin{equation*}
\frac{\psi'(\alpha)}{\psi''(\alpha)} + \frac{\psi'(\beta)}{\psi''(\beta)} - \frac{\psi'(\alpha+\beta)}{\psi''(\alpha+\beta)} \geq 0.
\end{equation*}
This has been shown recently in \cite{yang2017} (Corollary 4). Now, to show that it is bounded from below, set
\begin{align*}
k_1(\alpha) := \lim_{\beta\to 0}K(\alpha, \beta) = \frac{3}{4} - \frac{\psi'(\alpha)\psi'''(\alpha)}{2\, \psi''(\alpha)^2},\\
k_2(\alpha) := \lim_{\beta\to +\infty} K(\alpha, \beta) = \frac{\alpha\, \psi''(\alpha) + \psi'(\alpha)}{4(\alpha\,\psi'(\alpha) - 1)^2}.
\end{align*}
$k_1$ and $k_2$ are continuous functions on $\R_+^*$, and according to Proposition \ref{prop:asymptotic} they have finite limits at the boundaries
\begin{align*}
\lim_{\alpha\to 0} k_1(\alpha) = 0, \quad \lim_{\alpha\to+\infty} k_1(\alpha) = -\frac{1}{4},\\
\lim_{\alpha\to 0} k_2(\alpha) = -\frac{1}{4}, \quad \lim_{\alpha\to+\infty} k_2(\alpha) = -\frac{1}{2}.
\end{align*}
Therefore, they are bounded, i.e., there exist negative finite constants $M_1$ and $M_2$ such that for all $\alpha\in \R_+^*$,
\begin{equation*}
\lim_{\beta\to 0} K(\alpha, \beta) > M_1, \quad \lim_{\beta\to+\infty} K(\alpha, \beta) > M_2.
\end{equation*}
Setting $f(\beta):=\inf_{\alpha\in\R_+^*}K(\alpha,\beta)$, notice that $f$ is a continuous function on $\R_+^*$ due to the continuity of $K$ in both its variables and the invertibility of the limit and infimum. For this last reason, we also obtain
\begin{align*}
\lim_{\beta\to0}\inf_{\alpha\in\R_+^*}K(\alpha,\beta)=\inf_{\alpha\in\R_+^*}\lim_{\beta\to0}K(\alpha,\beta) > M_1,\\
\lim_{\beta\to+\infty}\inf_{\alpha\in\R_+^*}K(\alpha,\beta)=\inf_{\alpha\in\R_+^*}\lim_{\beta\to+\infty}K(\alpha,\beta) > M_2,
\end{align*}
i.e., $f$ has finite limits at the boundaries and is therefore bounded, in particular from below
\begin{equation*}
\inf_{\beta\in\R_+^*}\inf_{\alpha\in\R_+^*}K(\alpha, \beta) > -\infty.
\end{equation*}
\end{proof}
The fact that the beta manifold has negative curvature is particularly interesting for the computation of Riemannian centroids such as Fréchet or Karcher means \cite{frechet1948, karcher1977}. In general, the Fréchet mean on a Riemannian manifold is not unique. However, when the curvature is negative, there is no cut locus and uniqueness holds. In this context, it is defined for any given sequence of probability measure $B_1,\ldots, B_n$ as
\begin{equation*}
\bar B = \underset{B\in \mathcal B}{\text{argmin}} \sum_{i=1}^n d^F(B,B_i)^2.
\end{equation*}
This quantity can be computed using a gradient descent algorithm the Karcher flow algorithm. 

\subsection{A lower bound on the determinant of the metric}
The determinant of the metric is the key ingredient to volume computations. In this section, a lower bound of this determinant is computed, which is also its asymptotic value.
\begin{proposition}
\label{prop:detg_integral}
The determinant of the information metric matrix admits the following integral representation:
\begin{equation}
    \label{eq:detg_integral}
    \left|g(\alpha,\beta)\right| = \int_{\R^+}\int_0^1 
    \frac{x (1-x)}{(1-e^{-t x})(1-e^{-t(1-x)})} \left(
    \left(e^{\beta t x} -1 \right)\left(e^{-\alpha t (1-x)}\right) -1 
    \right) e^{-(\alpha + \beta)t} dx dt
\end{equation}
\end{proposition}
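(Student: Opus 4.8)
The plan is to combine the closed form of the metric determinant from Proposition~\ref{prop:metric} with the classical integral representation of the trigamma function. By Proposition~\ref{prop:metric},
\begin{equation*}
\left|g(\alpha,\beta)\right| = \psi'(\alpha)\psi'(\beta) - \psi'(\alpha+\beta)\big(\psi'(\alpha)+\psi'(\beta)\big),
\end{equation*}
so it is enough to produce an integral representation of each product $\psi'(a)\psi'(b)$ and then recombine. Starting from the series expansion of $\psi'$ recalled above and summing the geometric series $\sum_{k\geq 0}e^{-kt}=(1-e^{-t})^{-1}$ — the interchange of sum and integral being licit by Tonelli, all terms being nonnegative — one gets, for every $x>0$,
\begin{equation*}
\psi'(x)=\int_0^{\infty}\frac{t\,e^{-xt}}{1-e^{-t}}\,dt .
\end{equation*}

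Next I would write, for $a,b>0$, the product as a double integral over $\R^+\times\R^+$,
\begin{equation*}
\psi'(a)\psi'(b)=\int_0^{\infty}\!\!\int_0^{\infty}\frac{s\,u\,e^{-as-bu}}{(1-e^{-s})(1-e^{-u})}\,ds\,du ,
\end{equation*}
again legitimate by Tonelli since the integrand is nonnegative, and then perform the change of variables $s=tx$, $u=t(1-x)$. This is a diffeomorphism of $\R^+\times(0,1)$ onto $\R^+\times\R^+$ with $ds\,du=t\,dt\,dx$, and it turns each of the three products occurring in $\left|g(\alpha,\beta)\right|$ into an integral over $(t,x)\in\R^+\times(0,1)$ against the common weight $\dfrac{t^{3}\,x(1-x)}{(1-e^{-tx})(1-e^{-t(1-x)})}$, the three contributions differing only through their exponential factor (respectively $e^{-\alpha tx-\beta t(1-x)}$, $e^{-\alpha tx-(\alpha+\beta)t(1-x)}$ and $e^{-(\alpha+\beta)tx-\beta t(1-x)}$, after choosing in each product which dummy variable carries which $\Gamma$-argument — a harmless choice since the weight is invariant under $x\mapsto 1-x$). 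Because each of these three integrals is finite (it equals $\psi'(a)\psi'(b)$), their difference may be taken under a single integral sign; pulling $e^{-(\alpha+\beta)t}$ out of the resulting bracket and using $t=tx+t(1-x)$ to simplify the surviving exponentials then recognises the combination as $\bigl(e^{\beta tx}-1\bigr)\bigl(e^{\alpha t(1-x)}-1\bigr)-1$, which is the integrand of \eqref{eq:detg_integral}.

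The analytic justifications are all routine: the sum/integral interchange and the passage to a double integral are instances of Tonelli, the change of variables is standard, and the recombination into one integral only needs finiteness of the three pieces (absolute convergence being clear as well, since near $t=0$ one has $1-e^{-ty}\sim ty$, so the weight behaves like $t$, while as $t\to\infty$ the exponential factors decay and dominate). The one place that demands care is the bookkeeping in the last step — keeping track of the three exponents up to the relabelling above, factoring $e^{-(\alpha+\beta)t}$ out correctly, and identifying the remaining combination — and I expect sign and index errors there to be the only real risk; everything else is mechanical.
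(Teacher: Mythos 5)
Your proof is correct, and at its core it runs on the same engine as the paper's: both start from the integral representation $\psi'(x)=\int_0^\infty \frac{t\,e^{-xt}}{1-e^{-t}}\,dt$ and both convert products of such integrals into a single integral over $(t,x)\in\R^+\times(0,1)$ from which $e^{-(\alpha+\beta)t}$ factors out. The differences are organizational. The paper groups the determinant as $(\psi'(\alpha)-\psi'(\alpha+\beta))(\psi'(\beta)-\psi'(\alpha+\beta))-\psi'(\alpha+\beta)^2$, notes that each difference is itself a Laplace transform evaluated at $\alpha+\beta$, and invokes the Laplace convolution theorem before rescaling the convolution variable to $[0,1]$; you instead expand the determinant into the three products $\psi'(\alpha)\psi'(\beta)-\psi'(\alpha+\beta)\psi'(\alpha)-\psi'(\alpha+\beta)\psi'(\beta)$ and perform the convolution by hand, via Tonelli and the substitution $(s,u)=(tx,t(1-x))$ with $ds\,du=t\,dt\,dx$. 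Your route is marginally more self-contained (no convolution theorem cited, and the interchanges are explicitly justified by nonnegativity), while the paper's is lighter on exponent bookkeeping since each factor is already a single Laplace transform at $\alpha+\beta$. Your bookkeeping does check out: with your assignment of arguments to dummy variables the three exponentials recombine to $e^{-(\alpha+\beta)t}\big[(e^{\beta t x}-1)(e^{\alpha t(1-x)}-1)-1\big]$ against the weight $t^3 x(1-x)/\big((1-e^{-tx})(1-e^{-t(1-x)})\big)$, which agrees with the formula actually obtained at the end of the paper's proof. Note that the proposition as printed omits the factor $t^3$ and misplaces a parenthesis in the factor $(e^{\alpha t(1-x)}-1)$; your result matches the corrected form derived in the proof rather than the statement's typo-laden display.
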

\begin{proof}
The polygamma function of order $n$ can be expressed as an integral \cite{olver2010nist} :
\begin{equation}
    \label{eq:polygamma_int}
    \psi^{(n)}(x) = (-1)^{n+1}\int_{\R^+} \frac{t^n}{1-e^{-t}} e^{-x t} dt, \, x > 0
\end{equation}
The determinant $ \left|g(\alpha,\beta)\right|$ expands as:
\begin{equation}
    \label{eq:detg_expansion}
    \left|g(\alpha,\beta)\right| = \left(\psi^\prime(\alpha)-\psi^\prime(\alpha+\beta)\right)
    \left(\psi^\prime(\beta)-\psi^\prime(\alpha+\beta)\right)
    - \psi^\prime(\alpha+\beta)^2
\end{equation}
Using the integral \ref{eq:polygamma_int}, it comes:
\begin{equation}
    \label{eq:psiprime_reduction}
    \psi^\prime(\alpha)- \psi^\prime(\alpha+\beta) = \int_{\R^+} \frac{t}{1-e^{-t}} e^{-\alpha t} - e^{-(\alpha+\beta)t}dt = 
    \int_{\R^+} \frac{t}{1-e^{-t}} (e^{\alpha t} - 1 )e^{-(\alpha+\beta)t}dt
\end{equation}
The difference $\psi^\prime(\alpha)- \psi^\prime(\alpha+\beta)$ is thus equal to the laplace transform at $\alpha +\beta$ of the function:
\begin{equation}
    \label{eq:diff_laplace}
    \frac{t}{1-e^{-t}} (e^{\alpha t} - 1 )
\end{equation}
Using the convolution theorem \cite{schiff2013laplace}, it comes:
\begin{equation}
\label{eq:laplace_conv_diff}
\begin{split}
    & \left(\psi^\prime(\alpha)-\psi^\prime(\alpha+\beta)\right)
    \left(\psi^\prime(\beta)-\psi^\prime(\alpha+\beta)\right) = \\
    & \int_{\R^+} \left(\int_0^t\frac{x(t-x)}{(1-e^{-x})(1-e^{-(1-x)})} (e^{\beta x} - 1 )(e^{\alpha(t-x)}-1)dx\right) e^{-(\alpha+\beta)t}dt = \\
   & \int_{\R^+} t^3 \left(\int_0^1\frac{x(1-x)}{(1-e^{-t x})(1-e^{-t (1-x)})} (e^{\beta t x} - 1 )(e^{\alpha t (1-x)}-1)dx\right) e^{-(\alpha+\beta)t}dt
\end{split}
\end{equation}
The same procedure can be applied to the integral expression of $\psi^\prime(\alpha+\beta)$ to obtain:
\begin{equation}
    \label{eq:laplace_conv_psi2}
    \begin{split}
        & \psi^{\prime 2}(\alpha+\beta) =  \\
        & \int_{\R^+} t^3 \left(\int_0^1\frac{x(1-x)}{(1-e^{-t x})(1-e^{-t (1-x)})}dx\right) e^{-(\alpha+\beta)t}dt
    \end{split}
\end{equation}
Combining \ref{eq:laplace_conv_diff} and \ref{eq:laplace_conv_psi2} gives:
\begin{equation}
    \label{eq:detg_integral}
    \begin{split}
        & |g(\alpha,\beta)| = \\
         & \int_{\R^+} t^3 \left(\int_0^1\frac{x(1-x)}{(1-e^{-t x})(1-e^{-t (1-x)})} \left[ (e^{\beta t x} - 1 )(e^{\alpha t (1-x)}-1)-1\right] dx\right) e^{-(\alpha+\beta)t}dt
    \end{split}
\end{equation}
\end{proof}
Building on the integral representation of Proposition \ref{prop:detg_integral}, it is possible to derive a lower bound for the determinant, which is also its asymptotic value.
\begin{proposition}
The following lower bound holds:
\begin{equation}
    \label{eq:detg_lower_bound}
     |g(\alpha,\beta)| > \frac{1 + \alpha + \beta}{2\alpha \beta\left(\alpha+\beta\right)^2}
\end{equation}
\end{proposition}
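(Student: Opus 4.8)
The plan is to start from the integral representation in Proposition \ref{prop:detg_integral} and bound the inner integrand from below by a tractable expression. Writing
\begin{equation*}
|g(\alpha,\beta)| = \int_{\R^+} t^3 \left(\int_0^1 \frac{x(1-x)}{(1-e^{-tx})(1-e^{-t(1-x)})}\left[(e^{\beta tx}-1)(e^{\alpha t(1-x)}-1)-1\right]dx\right) e^{-(\alpha+\beta)t}dt,
\end{equation*}
the key observation is that $1-e^{-u} < u$ for $u>0$, so $\frac{x(1-x)}{(1-e^{-tx})(1-e^{-t(1-x)})} > \frac{x(1-x)}{tx\cdot t(1-x)} = \frac{1}{t^2}$. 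The bracketed factor $(e^{\beta tx}-1)(e^{\alpha t(1-x)}-1)-1$ is not always nonnegative, so I would need to be a little careful; a cleaner route is to use $e^u - 1 > u$ to get $(e^{\beta tx}-1)(e^{\alpha t(1-x)}-1) > \beta\alpha t^2 x(1-x)$, but then the $-1$ must be controlled. I expect the intended argument instead splits the integrand additively: the part coming from $(e^{\beta tx}-1)(e^{\alpha t(1-x)}-1)$ and the part coming from $-1$ are each handled by the same convolution-Laplace machinery that produced equations \eqref{eq:laplace_conv_diff} and \eqref{eq:laplace_conv_psi2}, so that
\begin{equation*}
|g(\alpha,\beta)| = \big(\psi'(\alpha)-\psi'(\alpha+\beta)\big)\big(\psi'(\beta)-\psi'(\alpha+\beta)\big) - \psi'(\alpha+\beta)^2,
\end{equation*}
and each term is estimated separately using the asymptotic-type bounds on $\psi'$.

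Concretely, the plan is: First, bound $\psi'(\alpha+\beta)$ from above. From the series $\psi'(x) = \sum_{k\ge 0}(k+x)^{-2}$ one gets $\psi'(x) < \frac1{x^2} + \int_0^\infty (k+x)^{-2}dk = \frac1{x^2} + \frac1{x} = \frac{1+x}{x^2}$; more precisely, comparing the sum with an integral gives $\psi'(x) < \frac1x + \frac1{2x^2}$ which is sharper, and a matching lower bound $\psi'(x) > \frac1x$. Second, apply these to the expansion: $\psi'(\alpha) - \psi'(\alpha+\beta) > \frac1{\alpha} - \left(\frac1{\alpha+\beta}+\frac1{2(\alpha+\beta)^2}\right)$, and similarly with $\alpha,\beta$ swapped, and $\psi'(\alpha+\beta)^2 < \left(\frac1{\alpha+\beta}+\frac1{2(\alpha+\beta)^2}\right)^2$. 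Third, substitute into the product-minus-square expression and simplify, aiming to show the resulting rational function of $\alpha,\beta$ is at least $\frac{1+\alpha+\beta}{2\alpha\beta(\alpha+\beta)^2}$. Since the proposed bound is stated to be the asymptotic value, the leading behaviour as $\alpha,\beta\to\infty$ should match $\frac{1}{2\alpha\beta(\alpha+\beta)}$ (the $(1+\alpha+\beta)$ numerator contributing the next order), which is a useful consistency check at each algebraic step.

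The main obstacle I anticipate is the final algebraic simplification: after substituting the bounds one obtains a difference of rational functions that must be shown to dominate $\frac{1+\alpha+\beta}{2\alpha\beta(\alpha+\beta)^2}$ uniformly on $\R_+^*\times\R_+^*$, and the naive bounds may lose too much. In particular, the bound $\psi'(x) > \frac1x$ used for the $\psi'(\alpha)$, $\psi'(\beta)$ factors and the bound $\psi'(x) < \frac1x+\frac1{2x^2}$ used for the subtracted $\psi'(\alpha+\beta)$ terms must be balanced so that the dominant $\frac1{\alpha(\alpha+\beta)}$-type and $\frac1{\beta(\alpha+\beta)}$-type contributions survive with the right sign. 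It may be necessary to clear denominators, reduce to showing a polynomial in $\alpha,\beta$ is positive, and then argue positivity of that polynomial by grouping terms (all coefficients positive, or a sum of manifestly nonnegative monomials). An alternative, possibly cleaner, approach is to go back to the integral form, lower-bound the kernel by $1/t^2$ as above, and then compute the resulting elementary integral $\int_{\R^+} t\left(\int_0^1 \left[(e^{\beta tx}-1)(e^{\alpha t(1-x)}-1)-1\right]dx\right)e^{-(\alpha+\beta)t}dt$ in closed form; whichever path yields the cleaner positivity certificate is the one I would ultimately present, with the strict inequality coming from the strictness of $1-e^{-u}<u$.
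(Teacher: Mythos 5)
Your starting point --- the integral representation of Proposition~\ref{prop:detg_integral} --- is also the paper's, but neither of the two concrete strategies you sketch can actually reach the stated bound, so there is a genuine gap. In your main route you bound each polygamma term separately. First, the inequality $\psi'(x) < \tfrac1x + \tfrac{1}{2x^2}$ that you propose is reversed: the correct two-sided bound is $\tfrac1x+\tfrac1{2x^2} < \psi'(x) < \tfrac1x+\tfrac1{x^2}$ (e.g.\ $\psi'(1)=\pi^2/6\approx1.645>1.5$). Second, even with the correct bounds the termwise strategy is provably too lossy: using $\psi'(\alpha),\psi'(\beta) > L$ with $L(x)=\tfrac1x+\tfrac1{2x^2}$ and $\psi'(\alpha+\beta) < U$ with $U(x)=\tfrac1x+\tfrac1{x^2}$, one finds on the diagonal $\alpha=\beta$ that $L(\alpha)-U(2\alpha)=\tfrac1{2\alpha}+\tfrac1{4\alpha^2}=U(2\alpha)$, so the resulting lower bound $\bigl(L(\alpha)-U(2\alpha)\bigr)^2-U(2\alpha)^2$ is exactly $0$, while the target is $\tfrac{1+2\alpha}{8\alpha^4}$ (at $\alpha=\beta=1$: $0$ versus the required $3/8$). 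The cancellations in $\psi'(\alpha)-\psi'(\alpha+\beta)$ and in the product-minus-square structure must be treated jointly, which is exactly what the paper's Laplace-convolution representation preserves and what termwise rational bounds destroy.

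Your fallback route fails for a quantitative reason as well: bounding the kernel below by $1/t^2$ via $1-e^{-u}<u$ amounts to replacing $\psi'(x)$ by $1/x$, for which the determinant vanishes identically, and indeed the elementary integral you propose to compute, $\int_0^\infty t\bigl(\int_0^1[(e^{\beta tx}-1)(e^{\alpha t(1-x)}-1)-1]\,dx\bigr)e^{-(\alpha+\beta)t}\,dt$, evaluates exactly to $\tfrac{1}{\alpha\beta}-\tfrac{1}{\alpha(\alpha+\beta)}-\tfrac{1}{\beta(\alpha+\beta)}=0$; it therefore cannot produce the strictly positive right-hand side. The paper's proof uses the sharper pointwise bound obtained from $\tfrac{1}{1-e^{-u}}=\tfrac12\bigl(1+\coth\tfrac u2\bigr)$ and $\coth v>1/v$, which lower-bounds the kernel by $\tfrac{x(1-x)}{4}+\tfrac1{2t}+\tfrac1{t^2}$; it is precisely the extra terms $\tfrac{x(1-x)}{4}+\tfrac1{2t}$, kept through the closed-form evaluation of the inner and outer integrals, that yield $\tfrac{1+\alpha+\beta}{2\alpha\beta(\alpha+\beta)^2}$. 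Finally, the sign problem you correctly flagged --- the bracket $(e^{\beta tx}-1)(e^{\alpha t(1-x)}-1)-1$ is negative for small $t$, so substituting a smaller kernel is not a pointwise-legitimate lower bound there --- is left unresolved in your plan (the paper's own write-up also passes over it lightly); a careful version of either argument should split off the $-1$ term and control its contribution with an upper bound on the kernel.
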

\begin{proof}
The hyperbolic cotangent $\coth$ satisfies:
\begin{equation}
    \label{eq:coth_half}
    \coth{\frac{x}{2}} = \frac{1+e^{-x}}{1-e^{-x}}
\end{equation}
and so:
\begin{equation}
    \label{eq:coth_expansion}
    \frac{1}{1-e^{-x}} = \frac{1}{2} + \frac{1}{2} \coth{\frac{x}{2}}
\end{equation}
Letting:
\begin{equation}
    \label{eq:inner_integrand}
    h(x,t) = \frac{x}{2} \left(1+\coth{\frac{x}{2}}\right)
\end{equation}
The integral expression \ref{eq:detg_integral} is rewritten as:
\begin{equation}
    \label{eq:detg_integral_coth}
    \begin{split}
        & |g(\alpha,\beta)| = \\
         & \int_{\R^+} t^3 \left(\int_0^1 h(x,t)h\left((1-x)t\right) \left[ (e^{\beta t x} - 1 )(e^{\alpha t (1-x)}-1)-1\right] dx\right) e^{-(\alpha+\beta)t}dt
    \end{split}
\end{equation}
Since:
\[
\coth{x} > \frac{1}{x}, \, x > 0
\]
it comes:
\begin{equation}
    \label{eq:h_bound}
    \begin{split}
    h(x,t)h\left((1-x)t\right) & > \frac{x(1-x)}{4}\left(1+\frac{2}{tx}\right)\left(1+\frac{2}{t(1-x)}\right) \\
    & > \frac{1}{4} \left(x + \frac{2}{t}\right)\left(1-x + \frac{2}{t}\right) 
    \end{split}
\end{equation}
a lower bound for \ref{eq:detg_integral_coth} is thus given by:
\begin{equation}
    \label{eq:detg_integral_bound}
    \begin{split}
        & |g(\alpha,\beta)| > \\
         & \int_{\R^+} t^3 \left(\int_0^1 \frac{1}{4} \left(x + \frac{2}{t}\right)\left(1-x + \frac{2}{t}\right)  \left[ (e^{\beta t x} - 1 )(e^{\alpha t (1-x)}-1)-1\right] dx\right) e^{-(\alpha+\beta)t}dt
    \end{split}
\end{equation}
The inner term:
\[
I(t) = t^3 \int_0^1 \frac{1}{4} \left(x + \frac{2}{t}\right)\left(1-x + \frac{2}{t}\right)  \left[ (e^{\beta t x} - 1 )(e^{\alpha t (1-x)}-1)-1\right] dx
\]
admits a closed form expression:
\begin{equation}
    \label{eq:inner_closed_form}
    I(t)  = \frac{1}{4 a^3 b^3 (a-b)^3}I_1(t) + I_2(t) -I_3(t) - I_4(t) +I_5(t) - I_6(t)
\end{equation}
with:
\begin{align}
    \label{eq:inner_closed_form_terms}
    & I_1(t)= a^6 \left(-\left(2 b^2 (t+2) \left(e^{b t}-1\right)+b t \left(e^{b t}+1\right)-2 e^{b t}+2\right)\right) \\
    & I_2(t) = a^5 b \left(4 b^2 (t+2) \left(e^{b t}-1\right)+3 b t \left(e^{b t}+1\right)-6 e^{b t}+6\right) \\
    & I_3(t) = 2 a^4 b^2 \left(b^2 (t+2) \left(-\left(e^{a t}-e^{b t}\right)\right)+b t \left(e^{b t}+2\right)-3 e^{b t}+3\right) \\
    & I_4(t) = 2 a^3 b^4 \left(2 b (t+2) \left(e^{a t}-1\right)-t \left(e^{a t}+2\right)\right) \\
    & I_5(t) = a^2 b^4 \left(2 b^2 (t+2) \left(e^{a t}-1\right)-3 b t \left(e^{a t}+1\right)-6 e^{a t}+6\right) \\
    & I_6(t) = 2 b^6 \left(e^{a t}-1\right)+a b^5 \left(e^{a t} (b t+6)+b t-6\right)
\end{align}
Performing the outer integration yields finally:
\begin{equation}
    |g(\alpha,\beta)| > \frac{1 + \alpha + \beta}{2\alpha \beta\left(\alpha+\beta\right)^2}
\end{equation}
thus completing the proof.
\end{proof}

\subsection{A geometric view point of the central limit theorem}

The central limit theorem tells us that once re-centered, a beta distribution $B(n\alpha,n\beta)$ converges at rate $\sqrt{n}$ to a centered normal distribution
\begin{equation*}
\sqrt{n}\left(B(n\alpha,n\beta) - \frac{\alpha}{\alpha+\beta}\right) \underset{n\to\infty}{\rightarrow} \mathcal N\left(0, \frac{ab}{(a+b)^3}\right).
\end{equation*}
For a fixed $\lambda >0$, the line $\beta= \lambda \alpha$ corresponds to all the beta distributions of mean $1/(1+\lambda)$. Asymptotically, we retrieve a hyperbolic distance between two distributions on this line.
\begin{proposition}
When $\beta = \lambda \alpha$ for a fixed $\lambda >0$, the metric is asymptotically 
\begin{equation*}
ds^2 = \frac{d\alpha^2}{2\alpha^2} + o\left(\frac{1}{\alpha^2}\right).
\end{equation*}
This means
\begin{equation*}
d^F(B(n\alpha, n\lambda\alpha), B(n\alpha',n\lambda\alpha')) \underset{n\to\infty}{\rightarrow} d^F(\mathcal N(0, \alpha), \mathcal N(0, \alpha'))
\end{equation*}
\end{proposition}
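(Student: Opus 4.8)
The plan is to compute the asymptotic form of the Fisher metric along the ray $\beta = \lambda\alpha$ as $\alpha\to\infty$, and then recognize the resulting one-dimensional metric as that of the Fisher geometry of the centered Gaussian family. First I would parameterize the ray by $\alpha$ alone, so that $\dot\beta = \lambda\dot\alpha$, and restrict the metric tensor from Proposition~\ref{prop:metric} to this curve: the induced metric coefficient is
\begin{equation*}
g_{\lambda}(\alpha) = (\psi'(\alpha) - \psi'(\alpha+\beta)) + 2\lambda(-\psi'(\alpha+\beta)) + \lambda^2(\psi'(\beta) - \psi'(\alpha+\beta)),
\end{equation*}
evaluated at $\beta = \lambda\alpha$. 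Using the expansion $\psi'(x) = 1/x + 1/(2x^2) + o(1/x^2)$ from \eqref{equivinfty}, I would substitute $\psi'(\alpha)\sim 1/\alpha$, $\psi'(\lambda\alpha)\sim 1/(\lambda\alpha)$, and $\psi'((1+\lambda)\alpha)\sim 1/((1+\lambda)\alpha)$, being careful to keep terms to the order needed. The leading $1/\alpha$ contributions should combine to give a coefficient of $1/\alpha$ that vanishes — indeed $\tfrac{1}{\alpha} - \tfrac{1}{(1+\lambda)\alpha} - \tfrac{2\lambda}{(1+\lambda)\alpha} + \tfrac{\lambda^2}{\lambda\alpha} - \tfrac{\lambda^2}{(1+\lambda)\alpha} = \tfrac{1}{\alpha}\big(1 + \lambda - \tfrac{(1+\lambda)^2}{1+\lambda}\big) = 0$ — so the metric is genuinely of order $1/\alpha^2$, and the next order term in each $\psi'$ must be tracked to extract the constant. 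Collecting the $1/(2x^2)$ pieces should yield $g_\lambda(\alpha) = \tfrac{1}{2\alpha^2}(1 + 1/\lambda + \lambda) \cdot(\text{something})$; I would simplify to confirm it equals $\tfrac{1}{2\alpha^2}$ as claimed, i.e. $ds^2 = d\alpha^2/(2\alpha^2) + o(1/\alpha^2)$. The main obstacle I anticipate is this cancellation bookkeeping: because the $O(1/\alpha)$ terms cancel, one must expand every digamma term one order further than naively expected, and a sign or a $\lambda$-power slip would corrupt the constant $1/2$; it is worth double-checking by testing $\lambda = 1$ against the Corollary's geodesic equation on the diagonal.

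For the second assertion, I would recall that the Fisher metric on the family of centered Gaussians $\mathcal N(0,\sigma)$ — parameterizing by the variance $\sigma$ — is a classical computation giving $ds^2 = d\sigma^2/(2\sigma^2)$, which is exactly the hyperbolic metric of curvature $-2$ on the half-line and matches the asymptotic form just derived with $\alpha$ playing the role of $\sigma$. Thus the arclength functional along the ray $\beta = \lambda\alpha$ in the beta manifold converges, as $n\to\infty$, to the arclength functional of the centered Gaussian model, and since both are one-dimensional the geodesic distances converge accordingly: the geodesic from $B(n\alpha, n\lambda\alpha)$ to $B(n\alpha', n\lambda\alpha')$ lies on the ray (it is a geodesic by the symmetry argument generalizing the Corollary, or at least its length is controlled by the on-ray path), and its length tends to $\int_{\alpha}^{\alpha'} d\sigma/(\sqrt 2\,\sigma) = \frac{1}{\sqrt 2}|\ln(\alpha'/\alpha)| = d^F(\mathcal N(0,\alpha), \mathcal N(0,\alpha'))$. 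I would phrase the convergence statement carefully: it is the statement that $d^F(B(n\alpha,n\lambda\alpha), B(n\alpha',n\lambda\alpha'))$ has the same limit as the corresponding centered-Gaussian distance, consistent with the CLT heuristic that $B(n\alpha,n\lambda\alpha)$ is asymptotically Gaussian with variance of order $1/n$. The one genuine subtlety to address is whether the minimizing geodesic between the two beta distributions stays near the ray; I would either invoke that $\beta = \lambda\alpha$ need not be geodesic but the distance is squeezed between the on-ray length and a lower bound from the metric expansion, or restrict the claim to the intrinsic distance within the submanifold, which suffices for the stated asymptotic identity.
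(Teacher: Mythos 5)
Your approach is essentially the paper's: restrict the metric to the ray $\beta=\lambda\alpha$, expand the three $\psi'$ terms using \eqref{equivinfty}, observe that the $O(1/\alpha)$ terms cancel while the second-order terms give $\tfrac{1}{2\alpha^2}+\tfrac{1}{2\alpha^2}-\tfrac{1}{2\alpha^2}=\tfrac{1}{2\alpha^2}$ (your tentative factor $(1+1/\lambda+\lambda)$ is a bookkeeping slip, since the prefactors $\lambda^2$ and $(1+\lambda)^2$ cancel exactly against the $1/(2\lambda^2\alpha^2)$ and $1/(2(1+\lambda)^2\alpha^2)$ denominators, but your stated conclusion is the correct one), and identify the limit with the Fisher metric of the variance-parameterized centered Gaussian family. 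Your additional concern about whether the minimizing geodesic stays near the ray goes beyond the paper, whose proof stops at the asymptotic form of $ds^2$ and simply asserts the distance convergence; your squeeze argument between the on-ray length and a lower bound is a reasonable way to make that final step explicit.
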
 

\begin{proof}
The infinitesimal element of length is given by
\begin{equation*}
ds^2 = (\psi'(\alpha)-\psi'(\alpha+\beta))d\alpha^2+(\psi'(\beta)-\psi'(\alpha+\beta))d\beta^2 - 2\psi'(\alpha+\beta) d\alpha d\beta,
\end{equation*}
and so when $\beta = \lambda \alpha$ for a fixed $\lambda >0$,
\begin{equation*}
ds^2 = G(\alpha)d\alpha^2
\end{equation*}
where
\begin{equation*}
G(\alpha)=\psi'(\alpha)+\lambda^2\psi'(\lambda\alpha)-(1+\lambda)^2\psi'((1+\lambda)\alpha).
\end{equation*}
When $\alpha\to\infty$, we have asymptotically using \eqref{equivinfty}
\begin{equation*}
G(\alpha) = \frac{1}{\alpha} + \frac{1}{2\alpha^2} + \frac{\lambda}{\alpha} + \frac{1}{2\alpha^2} - \frac{1+\lambda}{\alpha} - \frac{1}{2\alpha^2} + o\left(\frac{1}{\alpha^2}\right) = \frac{1}{2\alpha^2} + o\left(\frac{1}{\alpha^2}\right),
\end{equation*}
yielding the desired result.
\end{proof}

\section{Geometric structure of canonical moments}\label{sec:moments}

In this section we use the previously described geometry to equip the space of canonical moments with a natural geometric structure. Indeed, beta distributions naturally arise in the study of these quantities. The canonical moments of a probability distribution offer an alternative to the usual moment representation, that can be considered more intrinsic. Qualitative properties such as symmetry or the fact that two probability distributions are identical up to linear transformation are easier seen on canonical moments than on regular moments. We first start by recalling the definition and some useful properties of the canonical moments. For an overview on the subject, we refer the reader to the monograph \cite{dette1997}.

\subsection{Canonical moments: definition and useful properties}

Here we are interested in probability distributions defined on finite intervals $[a,b]$ with $a<b$. Since any such distribution can be uniquely mapped to one on $[0,1]$ through translation and rescaling $x\mapsto (x-a)/(b-a)$, we restrict to the cas $[a,b]=[0,1]$. Let $\mathcal P$ denote the set of all probability measures on $[0,1]$, and $\mu\in\mathcal P$ be such a measure. Its $n$-th order moment is given by
\begin{equation*}
c_n(\mu)=\int_a^b x^n d\mu(x).
\end{equation*}
Following \cite{dette1997}, we respectively denote its infinite sequence of moments and the truncated sequence of its $n$ first moments by
\begin{align*}
\cc(\mu)&=(c_1(\mu),c_2(\mu),\hdots)\\
\cc_n(\mu)&=(c_1(\mu), \hdots, c_n(\mu))
\end{align*}  
The spaces of all moment sequences and sequences of size $n$ corresponding to the $n$ first moments of a probability measure are respectively denoted by
\begin{equation*}
M = \{\cc(\mu), \mu\in \mathcal P\}, \qquad M_n = \{ \cc_n=(c_1,\hdots, c_n)\in[0,1]^n,\exists \mu\in \mathcal P\,\, \cc_n=\cc_n(\mu)\}.
\end{equation*}
Obvious elements of $M_n$ are the $n$-tuples $\cc_n(x)=(x,\hdots,x^n)$ corresponding to the first $n$ moments of Dirac distributions at points $x\in [0,1]$. In fact, it can be shown that $M_n$ is the convex hull of the curve $\cc_n(x)=(x,\hdots,x^n)$ of $[0,1]^n$ \cite[Theorem~1.2.1.]{dette1997}.
For $n=2$, this yields the area contained in-between the diagonal of the square $[0,1]^2$ and the parabola $x\mapsto x^2$, as shown in Figure \ref{fig:momentspace}. The points of the diagonal with coordinates $(x,x)$ uniquely correspond to the first two moments of Bernoulli distributions of parameter $x\in[0,1]$, while the points of the parabola uniquely represent Dirac distributions. More generally, any point of the boundary of $M_n$ corresponds to the first moments of a unique probability measure, which is discrete (i.e., a linear combination of Dirac distributions), whereas any point of the interior corresponds to an infinity of probability distributions. In other words, for each sequence $\cc_n=(c_1,\hdots,c_n)\in M_n$, the set of measures
\begin{equation*}
\mathcal P(\cc_n) = \{ \mu\in \mathcal P, \cc_n(\mu) = \cc_n\},
\end{equation*}
is infinite if $\cc_n\in \text{Int} M$ and it is a singleton if $\cc_n\in\partial M_n$. For a fixed $\cc_{n-1}\in M_{n-1}$, the intersection of the vertical line above $\cc_{n-1}$ and the compact and convex set $M_n$ in $[0,1]^n$ yields an interval of all possible values for the following $n^{\text{th}}$ moment, with minimum and maximum values
\begin{equation*}
c_{n}^- = \min_{\mu\in\mathcal P(\cc_{n-1})} c_{n}(\mu), \qquad c_{n}^+ = \max_{\mu\in\mathcal P(\cc_{n-1})} c_{n}(\mu).
\end{equation*}
Note that $c_{n}^+$ and $c_{n}^-$ are equal if $\cc_{n-1}$ is a boundary point. For $\cc\in M$, let $N(\cc)=\min\{n\in\mathbb N, \cc_n\in\partial M_n\}$. Then for all $n\leq N(\cc)$, the $n^{\text{th}}$ canonical moment describes the relative position of the $n^{\text{th}}$ regular moment $c_n$ with respect to these lower and upper bounds
\begin{equation*}
p_n = \frac{c_n - c_n^-}{c_n^+ - c_n^-}.
\end{equation*}
\begin{figure}
\centering
\includegraphics[width=20em]{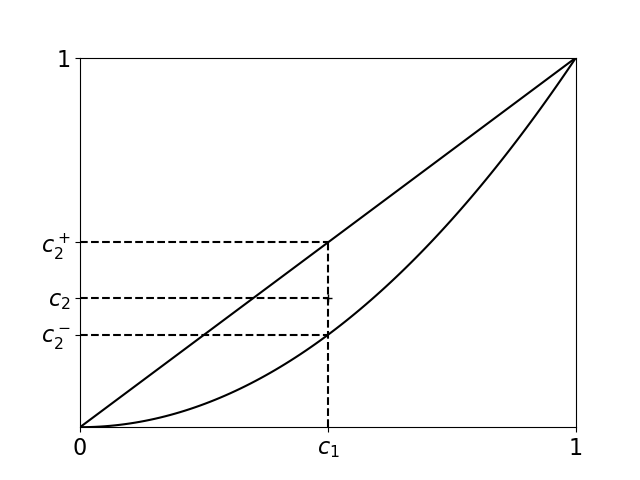}
\caption{The space $M_2$ of all first two moments on $[0,1]$.}
\label{fig:momentspace}
\end{figure}
The canonical moments provide an alternative representation of the underlying distribution that can be seen as more intrinsic in some sense. Indeed, the canonical moments representation benefits from the following interesting properties  \cite[Theorem 1.3.2 and Corollary 1.3.4]{dette1997}.
\begin{theorem}
Let $\mu\in \mathcal P$ and $a<b$. We denote by $\mu_{ab}$ the image measure of $\mu$ by the linear transformation $x\mapsto (b-a)x+a$, by $\mu^{(r)}$ the reflection of $\mu$ with respect to the point $1/2$. Then we have the following properties.
\begin{itemize}
\item[(i)] The canonical moments of $\mu$ remain invariant under linear transformation, i.e.
\begin{equation*}
p_k(\mu) = p_k(\mu_{ab}), \quad k=1,\hdots, N(\cc).
\end{equation*}
\item[(ii)] The measure $\mu$ is symmetric, i.e. $\mu=\mu^{(r)}$, if and only if
\begin{equation*}
p_k=1/2, \quad 1\leq k\leq (N(\cc)+1)/2.
\end{equation*}
\end{itemize}
\end{theorem}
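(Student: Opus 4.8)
The plan is to derive both statements from the way the moment bodies $M_n$ transform under an affine change of variable on $[0,1]$. Fix $a<b$ and set $\ell(x)=(b-a)x+a$, with $\ell_\ast$ the induced pushforward of measures, a bijection from the probability measures on $[0,1]$ onto those on $[a,b]$. Expanding $\ell(x)^n=(b-a)^nx^n+\sum_{j<n}\binom{n}{j}(b-a)^ja^{n-j}x^j$ and integrating against $\mu$ shows that $\Phi_n\colon \cc_n(\mu)\mapsto \cc_n(\ell_\ast\mu)$ is the restriction to $M_n$ of an affine automorphism of $\R^n$ that is lower triangular, with diagonal entries $(b-a),(b-a)^2,\dots,(b-a)^n$, all positive. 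Hence $\Phi_n$ is strictly increasing in the last coordinate when the first $n-1$ are held fixed, it carries $M_n$ bijectively onto the moment body of $[a,b]$, and, being an affine isomorphism between convex bodies, it carries $\partial M_n$ onto the boundary of that body; in particular $N(\cc)$ is invariant.

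For (i), I would observe that $\Phi_n$ maps the fibre $\{c_n:\cc_{n-1}\ \text{fixed}\}\cap M_n$ onto the fibre over $\Phi_{n-1}(\cc_{n-1})$, acting on it by the increasing affine map $c_n\mapsto(b-a)^nc_n+\kappa$ with $\kappa$ depending only on $\cc_{n-1}$. It therefore sends the endpoints $c_n^-,c_n^+$ of that fibre to the endpoints of the image fibre, so $c_n^-$, $c_n$ and $c_n^+$ all undergo the same increasing affine transformation; the quotient $p_n=(c_n-c_n^-)/(c_n^+-c_n^-)$ is thus unchanged, which is precisely $p_k(\mu)=p_k(\mu_{ab})$ for $k\le N(\cc)$, the same argument applying to canonical moments taken on any interval.

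For (ii), I would run the same analysis with the reflection $R(x)=1-x$. The map $\Phi_n\colon \cc_n(\mu)\mapsto\cc_n(\mu^{(r)})$ is again affine, lower triangular and invertible, but its $n$-th diagonal entry is now $(-1)^n$. When $n$ is even $\Phi_n$ is increasing in the last coordinate, so as above $p_n(\mu^{(r)})=p_n(\mu)$; when $n$ is odd it is decreasing, hence it interchanges $c_n^-$ with $c_n^+$, giving $p_n(\mu^{(r)})=1-p_n(\mu)$. If $\mu=\mu^{(r)}$, the odd-$n$ relation forces $p_n=1/2$ for every odd $n\le N(\cc)$ — equivalently, writing the odd indices as $2k-1$, for $1\le k\le(N(\cc)+1)/2$ — and one notes that symmetry also forces $N(\cc)$ to be even, since otherwise the relation would impose $p_{N(\cc)}=1/2$ whereas $p_{N(\cc)}\in\{0,1\}$. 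Conversely, if all odd canonical moments of $\mu$ (up to index $N(\cc)$) equal $1/2$, the two relations give $p_j(\mu^{(r)})=p_j(\mu)$ for every $j\le N(\cc)$; since $N(\cc)$ is a reflection invariant, $\mu$ and $\mu^{(r)}$ have the same sequence of canonical moments, hence — as the moments $c_1,\dots,c_n$ can be reconstructed from $p_1,\dots,p_n$ by an explicit recursion — the same moments of every order, hence coincide by uniqueness in the Hausdorff moment problem on $[0,1]$.

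The hard part is (ii). The parity rule $p_n(\mu^{(r)})=1-p_n(\mu)$ for odd $n$ is what makes the statement work, and its proof rests entirely on the triangularity-and-sign bookkeeping for $\Phi_n$ above; the converse then relies on the standard but non-trivial fact that the passage from moments to canonical moments is bijective (equivalently, on the reconstruction recursion). Pinning down the precise index range in (ii) also requires some care with the definition of $N(\cc)$ and with the index of a discrete representing measure, for which I would appeal to the monograph \cite{dette1997}.
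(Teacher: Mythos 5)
The paper does not actually prove this theorem: it is stated with a pointer to Dette and Studden (Theorem 1.3.2 and Corollary 1.3.4), so there is no internal proof to compare against. Your argument is correct and is essentially the classical one from that monograph: you let the affine change of variables act on truncated moment vectors, note that the induced map is affine and lower triangular with $n$-th diagonal entry $(b-a)^n$ (resp. $(-1)^n$ for the reflection), hence maps $M_n$ affinely onto the target moment body, preserves $N(\cc)$, and carries the fibre over $\cc_{n-1}$ monotonically onto the image fibre. This gives invariance of $p_n$ in case (i), and in case (ii) the parity rule $p_n(\mu^{(r)})=1-p_n(\mu)$ for odd $n$ and $p_n(\mu^{(r)})=p_n(\mu)$ for even $n$, from which the forward implication is immediate; your remark that $p_{N(\cc)}\in\{0,1\}$, so symmetry forces $N(\cc)$ to be even (or infinite), is also correct. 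The converse is fine as well, provided you make explicit the (standard, and stated earlier in the paper) fact that a boundary point of $M_N$ has a unique representing measure: when $N(\cc)<\infty$, equality of $p_1,\hdots,p_{N(\cc)}$ gives equality of $\cc_{N(\cc)}(\mu)$ and $\cc_{N(\cc)}(\mu^{(r)})$ via the reconstruction recursion, and boundary uniqueness then yields $\mu=\mu^{(r)}$; when $N(\cc)=\infty$, Hausdorff determinacy suffices, as you say.

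One point worth flagging: statement (ii) as printed, namely $p_k=1/2$ for $1\le k\le (N(\cc)+1)/2$, is not literally correct — the symmetric measure $\tfrac12(\delta_{1/4}+\delta_{3/4})$ has $p_2=1/4$ — and the intended statement, as in Corollary 1.3.4 of the cited monograph, is $p_{2k-1}=1/2$ for $1\le k\le (N(\cc)+1)/2$, i.e. the odd-indexed canonical moments equal $1/2$. Your proof establishes exactly this corrected version, and your reading of the index range is the right one.
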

The canonical moments can be expressed using the so-called Hankel determinants, defined for any $n\geq 0$ by
\begin{align*}
\underline{H}_{2k} = \left| 
\begin{matrix} 
c_0 &\hdots& c_k \\
\vdots & \ddots & \vdots \\
c_k & \hdots & c_{2k}
\end{matrix}\right|, \qquad
\overline{H}_{2k} = \left| 
\begin{matrix} 
c_1-c_2 &\hdots& c_k - c_{k+1} \\
\vdots & \ddots & \vdots \\
c_k - c_{k+1} & \hdots & c_{2k-1} - c_{2k}
\end{matrix}\right| \\
\underline{H}_{2k+1}= \left| 
\begin{matrix} 
c_0 &\hdots& c_{k+1} \\
\vdots & \ddots & \vdots \\
c_{k+1} & \hdots & c_{2k+1}
\end{matrix}\right|, \qquad
\overline{H}_{2k+1} = \left| 
\begin{matrix} 
c_0-c_1 &\hdots& c_k - c_{k+1} \\
\vdots & \ddots & \vdots \\
c_k - c_{k+1} & \hdots & c_{2k} - c_{2k+1}
\end{matrix}\right| 
\end{align*}
The Hankel determinants verify the following relation \cite[Theorem 1.4.5]{dette1997}.
\begin{theorem}
For all $n\geq 0$,
\begin{equation*}
\underline{H}_n\overline{H}_n = \underline{H}_{n-1}\overline{H}_{n+1} + \overline{H}_{n-1}\underline{H}_{n+1}
\end{equation*}
\end{theorem}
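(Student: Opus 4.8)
The plan is to prove this as a purely algebraic determinantal identity: it holds for an arbitrary real sequence $c_0,c_1,c_2,\dots$, the probabilistic interpretation playing no role, so one may forget the measure $\mu$ entirely and treat the $c_n$ as formal variables. The natural tool is a condensation-type identity for minors. The cleanest incarnation is the three-term Pl\"ucker relation among the maximal minors of a rectangular matrix: if $A$ is an $r\times(r+2)$ matrix, $[S]$ denotes the $r\times r$ minor on a column set $S$, and $a<b<c<d$ are four of its columns while $T$ is the fixed set of the remaining $r-2$ columns, then
\begin{equation*}
[T\cup\{a,c\}]\,[T\cup\{b,d\}] \;=\; [T\cup\{a,b\}]\,[T\cup\{c,d\}] \;+\; [T\cup\{a,d\}]\,[T\cup\{b,c\}].
\end{equation*}
The plus sign here is intrinsic --- unlike the Desnanot--Jacobi (Dodgson condensation) identity, which carries a minus --- which already signals that this, rather than plain condensation, is the natural statement to aim at.

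The key steps would then be: (i) split into the even case $n=2k$ and the odd case $n=2k+1$, since the matrices defining $\underline H_m$ and $\overline H_m$ have shapes differing by one in the two parities; (ii) in each case, exhibit a single rectangular matrix built from the moments $c_0,\dots,c_{n+1}$ whose relevant maximal minors are, up to sign (and after bordering the smaller, $\overline H$-type, determinants with trivial rows and columns, legitimate since $c_0=1$, so that all six have the same size), exactly $\underline H_{n-1},\underline H_n,\underline H_{n+1},\overline H_{n-1},\overline H_n,\overline H_{n+1}$ --- the passage from the ``plain'' shape $\underline H$ to the ``differenced'' shape $\overline H$ being effected by the elementary column operations $C_j\leftarrow C_j-C_{j+1}$, which alter neither the relevant minors nor the matrix one condenses; (iii) apply the Pl\"ucker relation above with the four distinguished columns chosen as the two ``extra'' columns together with the two that toggle the $\underline H/\overline H$ pattern, and read off that $[T\cup\{a,c\}][T\cup\{b,d\}]=\underline H_n\overline H_n$ while the two products on the right come out as $\underline H_{n-1}\overline H_{n+1}$ and $\overline H_{n-1}\underline H_{n+1}$; (iv) dispose of the degenerate low-index cases $n=0,1$, where some of the determinants are empty and equal $1$, directly.

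The conceptual content is short, so the main obstacle is entirely in step (ii): one has to pin down the precise matrix and the precise labelling of the four distinguished columns so that every one of the six minors lands on the intended determinant with the correct sign, and one has to check that the column operations turning $\underline H$ into $\overline H$ never illegitimately mix a column that is kept with one that is deleted. Carrying the index shifts consistently through both parities, and keeping track of the reordering sign incurred when a minor emerges as a reflected Hankel block, is where essentially all the work lies. A possible alternative that sidesteps the matrix bookkeeping, at the cost of a different preparatory effort, is to induct on $n$ using the known product formulas expressing $\underline H_n$ and $\overline H_n$ in terms of the canonical moments $p_i$; granting those formulas the identity collapses to an elementary manipulation of products of the $p_i$ and $1-p_i$, but establishing the product formulas is itself of comparable length.
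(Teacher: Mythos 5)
The first thing to note is that the paper itself does not prove this identity: it is quoted verbatim from \cite[Theorem 1.4.5]{dette1997}, so there is no internal proof to compare with and your argument has to stand on its own. As a strategy it is sound: the identity is indeed purely algebraic in $c_0,c_1,\dots$, and the three-term Grassmann--Pl\"ucker relation is the right tool, as both the plus sign and the size pattern of the six determinants confirm. For $n=2$, for instance, the $2\times 4$ matrix with columns $(1,0)^t$, $(c_2,c_1)^t$, $(c_2-c_3,c_1-c_2)^t$, $(c_1-c_2,c_0-c_1)^t$ has exactly $\underline{H}_1,\overline{H}_2,\overline{H}_1,\underline{H}_3,\underline{H}_2,\overline{H}_3$ as its six maximal minors ($[12],[13],[14],[23],[24],[34]$ respectively), and the Pl\"ucker relation $[13][24]=[12][34]+[14][23]$ is precisely the claimed identity. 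The problem is that your step (ii) --- exhibiting such a matrix for general $n$, in both parities, and verifying that each of the six maximal minors equals the intended Hankel determinant with a compatible sign --- is the entire mathematical content of the proof, and you explicitly leave it undone, saying it is ``where essentially all the work lies.'' A plan that defers exactly that step is a gap, not a proof.

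Two details of the sketch would also need repair before it could be completed. First, it is not only the $\overline{H}$-type determinants that require bordering, and the two parities behave differently: for $n=2k$ the three smaller determinants are $\overline{H}_{2k}$, $\underline{H}_{2k-1}$ and $\overline{H}_{2k-1}$, and one unit column among the four distinguished ones suffices; for $n=2k+1$ the sizes are $k{+}1,k{+}1$ on the left and $k{+}1,k{+}1$ and $k,k{+}2$ on the right, so $\underline{H}_{2k+2}$ needs no bordering at all while $\overline{H}_{2k}$ must be bordered twice, forcing two unit columns among the distinguished four. (These patterns are in fact compatible with the Pl\"ucker relation, which is further evidence the route can be completed, but the construction must be set up separately in each parity, and the $c_0=1$ remark is unnecessary since bordering by unit rows and columns never requires it.) Second, the justification via the column operations $C_j\leftarrow C_j-C_{j+1}$ is not legitimate as stated: such an operation changes every minor that contains column $j$ but not column $j+1$, which is exactly the situation for some of the six minors involved; the correct move is to take the differenced Hankel columns as genuine columns of the rectangular matrix from the start, as in the $n=2$ example above. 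The alternative route you mention, inducting via the product formulas for $\underline{H}_n$ and $\overline{H}_n$ in terms of canonical moments, would also work, but those formulas are likewise not established in your proposal.
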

The relative position of the $n^{th}$ regular moment with respect to the lower and upper bounds $c_n^-$ and $c_n^+$ can be expressed in terms of quotients of Hankel determinants \cite[Theorem 1.4.4]{dette1997}
\begin{theorem}
If $\cc_{n-1} \in \text{Int}\, M_{n-1}$ then
\begin{equation*}
c_n - c_n^- = \underline{H}_n /\underline{H}_{n-2}, \qquad c_n^+ - c_n = \overline{H}_n/\overline{H}_{n-2}.
\end{equation*}
\end{theorem}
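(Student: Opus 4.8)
The plan is to read off the endpoints $c_n^-,c_n^+$ of the vertical fibre over $\cc_{n-1}$ directly from the Hankel-determinant description of $M_n$, exploiting that, once $\cc_{n-1}$ is frozen, each of the two determinants $\underline H_n$ and $\overline H_n$ is an \emph{affine} function of the single remaining coordinate $c_n$. First I would invoke the classical characterization of the truncated moment space on $[0,1]$ (see \cite[Ch.~1]{dette1997}): with $c_0=1$, a tuple $\cc_n=(c_1,\dots,c_n)$ belongs to $M_n$ if and only if the Hankel determinants $\underline H_j$ and $\overline H_j$ are all nonnegative for $0\le j\le n$. Because of the Hankel pattern, the coordinate $c_n$ occurs --- and only in a single corner entry --- in exactly $\underline H_n$ (corner entry $c_n$) and $\overline H_n$ (corner entry $c_{n-1}-c_n$); every other inequality involves $\cc_{n-1}$ alone, and the hypothesis $\cc_{n-1}\in\text{Int}\,M_{n-1}$ makes all of those hold strictly, in particular $\underline H_{n-2}>0$ and $\overline H_{n-2}>0$.

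Next I would make the affine dependence explicit via Schur complements. Writing the Hankel matrix underlying $\underline H_n$ in block form $\left(\begin{smallmatrix} A & b\\ b^{t} & c_n\end{smallmatrix}\right)$, where $A$ is precisely the Hankel matrix of determinant $\underline H_{n-2}$ and $b$ depends only on $\cc_{n-1}$, the Schur complement identity gives $\underline H_n=\underline H_{n-2}\,(c_n-\xi)$ with $\xi=b^{t}A^{-1}b$ a constant depending only on $\cc_{n-1}$; the same manipulation applied to $\overline H_n$ yields $\overline H_n=\overline H_{n-2}\,(\eta-c_n)$ for a constant $\eta$. Thus $\underline H_n$ has leading coefficient $\underline H_{n-2}>0$ in $c_n$, while $\overline H_n$ has leading coefficient $-\overline H_{n-2}<0$. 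Since $\underline H_{n-2}>0$ and $\overline H_{n-2}>0$, the constraint $\underline H_n\ge 0$ is exactly $c_n\ge\xi$ and $\overline H_n\ge 0$ is exactly $c_n\le\eta$; combined with the fact recalled before the statement that the fibre over an interior point $\cc_{n-1}$ is a nondegenerate interval, this identifies the admissible set of $c_n$ with $[\xi,\eta]$, so $c_n^-=\xi$ and $c_n^+=\eta$. Substituting back, $c_n-c_n^-=\underline H_n/\underline H_{n-2}$ and $c_n^+-c_n=\overline H_n/\overline H_{n-2}$, which is the claim.

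The step I expect to require the most care is the first one: pinning down the exact list of Hankel inequalities that define $M_n$ (the conventions for even versus odd $n$ differ), and verifying that, after discarding the inequalities made strict by $\cc_{n-1}\in\text{Int}\,M_{n-1}$, the two that survive govern the two ends of the fibre with the correct signs --- one producing the lower bound, the other the upper bound. This bookkeeping, together with the structural observation that the Hankel pattern confines $c_n$ to one corner entry, is the crux; the Schur-complement computation and the final substitution are then routine.
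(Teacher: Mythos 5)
The paper itself gives no proof of this statement: it is quoted verbatim from Dette and Studden (\cite[Theorem 1.4.4]{dette1997}), so there is no internal argument to compare yours against. Judged on its own, your proof is correct and is essentially the standard argument for this result. The two ingredients are exactly the right ones: the Hankel-determinant description of the moment space, and the structural fact that $c_n$ enters only the bottom-right corner entry of the matrices defining $\underline{H}_n$ (as $c_n$) and $\overline{H}_n$ (as $c_{n-1}-c_n$), so that a cofactor expansion -- equivalently your Schur complement -- gives $\underline{H}_n=\underline{H}_{n-2}\,(c_n-\xi)$ and $\overline{H}_n=\overline{H}_{n-2}\,(\eta-c_n)$ with $\xi,\eta$ depending on $\cc_{n-1}$ only; in both parities the deleted-row-and-column minor of the corner entry is precisely the matrix of $\underline{H}_{n-2}$, respectively $\overline{H}_{n-2}$, so the even/odd bookkeeping you flag does work out. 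One point should be stated more carefully: the blanket claim that $\cc_n\in M_n$ if and only if all $\underline{H}_j,\overline{H}_j\ge 0$ is delicate in general, since nonnegativity of the leading principal minors does not by itself imply positive semidefiniteness of the truncated moment matrices (which is the actual solvability criterion for the truncated Hausdorff problem). What you need, and what is true, is only the fibre statement over an \emph{interior} $\cc_{n-1}$: interiority gives $\underline{H}_j>0$ and $\overline{H}_j>0$ for $j\le n-1$, hence the blocks $A$ and $\tilde A$ are positive definite, and then the level-$n$ moment matrices are positive semidefinite precisely when their Schur complements are nonnegative, i.e. precisely when $\underline{H}_n\ge 0$ and $\overline{H}_n\ge 0$. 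With that restriction made explicit, the admissible set of $c_n$ is the interval $[\xi,\eta]$, so $c_n^-=\xi$, $c_n^+=\eta$, and the two identities follow as you state; the argument is complete.
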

Using these two results, it is possible to show that the Lebesgue measure in the interior of the moment space corresponds to a product measure of unnormalized beta distributions. More precisely, choosing an element $\cc_n=(c_1,\hdots,c_n)$ at random in the interior of $M_n$ is equivalent to choosing each canonical moment $p_k$ independently in $]0,1[$ according to a beta distribution $B(n-k+1,n-k+1)$ for $k=1,\hdots,n$. This is expressed by the following result, which corresponds to \cite[Example 1.4.12]{dette1997}, and for which we give the proof here for the sake of completeness. 
\begin{theorem}
\label{thm:betamoments}
\begin{equation*}
dc_1\hdots dc_n = \prod_{k=1}^n p_k^{n-k}(1-p_k)^{n-k} dp_k.
\end{equation*}
\end{theorem}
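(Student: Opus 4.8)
The plan is to prove the identity by induction on $n$, using the fact that a change of variables between the regular moments $(c_1,\dots,c_n)$ and the canonical moments $(p_1,\dots,p_n)$ has a triangular Jacobian, and then computing the determinant of that Jacobian via the Hankel determinant formulas quoted just above. Concretely, recall that $c_n$ depends on $(c_1,\dots,c_{n-1})$ and $p_n$ through $c_n = c_n^- + p_n(c_n^+ - c_n^-)$, where $c_n^-, c_n^+$ are functions of $c_1,\dots,c_{n-1}$ only. Hence the map $(p_1,\dots,p_n)\mapsto(c_1,\dots,c_n)$ is triangular: $c_k$ depends only on $p_1,\dots,p_k$. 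Therefore the Jacobian determinant is the product of the diagonal entries $\partial c_k/\partial p_k = c_k^+ - c_k^-$, and we get
\begin{equation*}
dc_1\cdots dc_n = \prod_{k=1}^n (c_k^+ - c_k^-)\, dp_1\cdots dp_n.
\end{equation*}

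The remaining work is to show that $c_k^+ - c_k^- = p_k^{n-k}(1-p_k)^{n-k}$ — wait, that cannot be literally right since the left side does not involve $n$; the correct statement is that the \emph{full} product $\prod_{k=1}^n (c_k^+ - c_k^-)$ telescopes against the Hankel determinants to give exactly $\prod_{k=1}^n p_k^{n-k}(1-p_k)^{n-k}$. To see this, write $c_k^+ - c_k^- = (c_k - c_k^-) + (c_k^+ - c_k) = \underline H_k/\underline H_{k-2} + \overline H_k/\overline H_{k-2}$ by the third quoted theorem, and also use the elementary identities (derivable from the definitions of the Hankel determinants and the defining relation $p_k = (c_k-c_k^-)/(c_k^+-c_k^-)$) that express $p_k$ and $1-p_k$ as ratios
\begin{equation*}
p_k = \frac{\underline H_k \overline H_{k-2}}{\underline H_{k-2}\overline H_{k-2}\,(c_k^+-c_k^-)}, \qquad 1-p_k = \frac{\overline H_k \underline H_{k-2}}{\underline H_{k-2}\overline H_{k-2}\,(c_k^+-c_k^-)}.
\end{equation*}
Substituting these, $p_k^{n-k}(1-p_k)^{n-k}$ becomes a monomial in the Hankel determinants $\underline H_j, \overline H_j$ divided by a power of $(c_k^+-c_k^-)$, and the product over $k$ should, after using the quadratic recurrence $\underline H_n\overline H_n = \underline H_{n-1}\overline H_{n+1} + \overline H_{n-1}\underline H_{n+1}$ to simplify, collapse to $\prod_k (c_k^+-c_k^-)$ times the claimed density. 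A cleaner route, and probably the one I would actually take, is to establish the single-step recursion: assuming the result for $n-1$, note that $dc_1\cdots dc_{n-1} = \prod_{k=1}^{n-1}p_k^{(n-1)-k}(1-p_k)^{(n-1)-k}dp_k$, and that passing from the $n-1$ to the $n$ case multiplies the left side by $dc_n$ and the right side (by the triangular Jacobian) by $(c_n^+-c_n^-)\,dp_n$ but \emph{also} changes every exponent $n-1-k$ to $n-k$, i.e. multiplies by $\prod_{k=1}^{n-1} p_k(1-p_k)$. So the induction step reduces to the single identity
\begin{equation*}
c_n^+ - c_n^- = \prod_{k=1}^{n-1} p_k(1-p_k),
\end{equation*}
which is exactly where the Hankel machinery and the quadratic recurrence are needed.

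Thus the heart of the proof is to verify $c_n^+ - c_n^- = \prod_{k=1}^{n-1} p_k(1-p_k)$. I would prove this, in turn, by induction: express $c_n^+ - c_n^-$ via Hankel determinant ratios as above, and show the ratio of consecutive values $(c_{n+1}^+-c_{n+1}^-)/(c_n^+-c_n^-)$ equals $p_n(1-p_n)$. Writing out $p_n$, $1-p_n$ and $c_{n+1}^\pm - c_{n+1}$, $c_n^\pm - c_n$ in terms of $\underline H_j, \overline H_j$, this reduces to an algebraic identity among six Hankel determinants with consecutive indices, which follows from two applications of the quadratic recurrence $\underline H_m\overline H_m = \underline H_{m-1}\overline H_{m+1} + \overline H_{m-1}\underline H_{m+1}$. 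The base case $n=1$ is trivial: $c_1^- = 0$, $c_1^+ = 1$, so $c_1^+ - c_1^- = 1$, the empty product.

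The main obstacle is precisely this bookkeeping with Hankel determinants — keeping the even/odd index cases straight (the definitions of $\underline H_n$ and $\overline H_n$ differ for even and odd $n$), correctly handling the degenerate conventions $\underline H_{-1} = \overline H_{-1} = \underline H_0 = \overline H_0 = 1$, and spotting the right pair of instances of the quadratic recurrence to telescope the product. Everything else — the triangularity of the coordinate change, the resulting product form of the Jacobian, and the exponent shift in the induction step — is routine once the key ratio identity $c_n^+-c_n^- = \prod_{k=1}^{n-1}p_k(1-p_k)$ is in hand. I expect the cleanest write-up to isolate that identity as a lemma, prove it by the Hankel recurrence, and then assemble the theorem in a few lines.
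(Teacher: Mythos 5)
Your proposal is correct and follows essentially the same route as the paper: a triangular change of variables between $(p_1,\hdots,p_n)$ and $(c_1,\hdots,c_n)$, whose diagonal Jacobian entries are computed from the Hankel-determinant expressions for $p_k$ and $1-p_k$ combined with the quadratic recurrence $\underline{H}_n\overline{H}_n = \underline{H}_{n-1}\overline{H}_{n+1} + \overline{H}_{n-1}\underline{H}_{n+1}$. The only cosmetic difference is the intermediate identity you isolate: you prove the range formula $c_n^+-c_n^-=\prod_{k=1}^{n-1}p_k(1-p_k)$, while the paper telescopes $c_n-c_n^-=\prod_{k=1}^{n}q_{k-1}p_k$ and differentiates in $p_n$; both yield the same diagonal entries $\partial c_k/\partial p_k=\prod_{\ell=1}^{k-1}p_\ell(1-p_\ell)$ and hence the same determinant.
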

\begin{proof}
Using the two previous results, the $n^{th}$ canonical moment $p_n$ and $q_n=1-p_n$ can be expressed as
\begin{equation*}
p_n = \frac{\underline{H}_n\overline{H}_{n-2}}{\underline{H}_{n-1}\overline{H}_{n-1}}, \qquad q_n = 1-p_n =  \frac{\overline{H}_n\underline{H}_{n-2}}{\underline{H}_{n-1}\overline{H}_{n-1}},
\end{equation*}
yielding the following recurrence relation
\begin{equation*}
c_n-c_n^- = \frac{\underline{H}_n}{\underline{H}_{n-2}} = \frac{\underline{H}_n\underline{H}_{n-3}}{\underline{H}_{n-2}\underline{H}_{n-1}}  \frac{\underline{H}_{n-1}}{\underline{H}_{n-3}} = q_{n-1}p_n (c_{n-1}-c_{n-1}^-).
\end{equation*}
This gives
\begin{equation*}
c_n - c_n- = \prod_{k=1}^{n} q_{k-1}p_k.
\end{equation*}
Since $p_n$ is a relative position, $c_n^-$ is independent of $p_n$ and since $p_0=c_0=1$,
\begin{equation*}
\frac{\partial c_n}{\partial p_n} = \prod_{k=1}^{n-1} p_kq_k
\end{equation*}
Since for all $k$, $c_k$ only depends on $p_\ell$ for $\ell\leq k$, the jacobian of the mapping $\phi : \text{Int}\,M_n \rightarrow ]0,1[^n$ that associates to any sequence $(c_1,\hdots,c_n)$ in the interior of $M_n$ the corresponding canonical moments $(p_1,\hdots,p_n)$ is lower triangular, and its determinant is given by
\begin{equation*}
\left|\frac{\partial(c_1,\hdots,c_n)}{\partial (p_1,\hdots,p_n)}\right| = \prod_{k=1}^{n}\frac{\partial c_k}{\partial p_k} = \prod_{k=1}^{n-1}(p_kq_k)^{n-k}.
\end{equation*}
\end{proof}
Therefore the beta distribution naturally arises in the structure of the canonical moments space. Based on the Hankel determinants and similar quantities, the canonical moments can be computed from the regular moments using the so-called $Q$-$D$ algorithm. A description and proof and this algorithm can be found in \cite{dette1997}.

\subsection{Geometry of canonical moments}

Now we define a natural geometric structure for the space of canonical moments. The goal is to be able to manipulate probability distributions, e.g. compute distances or centroids, through this geometric structure in the representation space given by the canonical moments. We propose a construction that associates to each sequence of canonical moments $(p_1,\hdots,p_n)$ a point in the product beta manifold $(\mathcal B^n, g^n)$, where $g^n$ is the product of Fisher metrics on $\mathcal B$. More precisely, we associate to each $p_k$ a beta distribution with the corresponding mean value, i.e., a point of the parameter space $\Theta=\{(\alpha, \beta), \alpha>0, \beta>0\}$ belonging to $\Delta_{p_k}$, where $\Delta_p$ is the straight line of equation
\begin{equation*}
\Delta_{p} : \beta = \left(\frac{1}{p}-1\right) \alpha, \quad 0<p<1.
\end{equation*}
Let $0<p<1$, and $B\in \mathcal B$. We denote by $\phi : \mathcal B \times (0,1) \rightarrow \mathcal B$ the map that associates to $B$ and $p$ the closest neighbor of $B$ on $\Delta_p$
\begin{equation*}
\phi(B;p) = \text{argmin}\, \{ d(B, B'), B'\in\Delta_p\}.
\end{equation*}
On the basis of Theorem \ref{thm:betamoments}, we define the following mapping
\begin{equation*}
\Phi :\begin{cases} (0,1)^n \rightarrow \mathcal B^n\\
(p_1,\hdots,p_n) \mapsto \big(\phi(B(n,n);p_1),\hdots, \phi(B(1,1);p_n)\big),
\end{cases}
\end{equation*}
Notice that $\Phi$ associates the center $(1/2,\hdots,1/2)$ of the cube $[0,1]^n$ to the sequence $(B(n,n),\hdots,B(1,1))$, and associates any sequence $(p_1,\hdots,p_n)$ in $(0,1)^n$ to the sequence of beta distributions for which each component $B_k$ is the closest beta distribution to $B(n-k+1,n-k+1)$ with mean value $p_k$. 

Now let $\cc_n,\cc_n' \in\text{Int}\,M_n$. Then their canonical moments representations are in $(0,1)^n$, and we can define the following measure of dissimilarity
\begin{equation*}
\rho_n(\cc_n,\cc'_n) = d_n^F(\Phi(p_1,\hdots,p_n),\Phi(p_1',\hdots, p_n')), 
\end{equation*}
where $p_k = p_k(\cc_n)$ and $p_k'=p_k(\cc'_n)$, and
\begin{equation*}
d_n^F((B_1,\hdots,B_n),(B_1',\hdots,B_n')) = (d^F(B_1,B_1')^2+\hdots + d^F(B_n,B_n')^2)^{1/2}
\end{equation*}
is the product distance on $\mathcal B^n$. Since $\cc_n$ uniquely determines $(p_1,\hdots,p_n)$ and $\Phi$ is injective, $\rho_n$ is a distance. It is now possible to define e.g. the centroid of several moment sequences $\cc_n^{(k)}, k=1,\hdots,m$, as the Fréchet mean
\begin{equation*}
    \overline{\cc_n} = \underset{\cc_n}{\text{argmin}} \sum_{k=1}^m \rho_n^2(\cc_n,\cc_n^{(k)}),
\end{equation*}
and to compute it using a Karcher flow in the product manifold $\mathcal B^n$. Due to the negative curvature of that Riemannian manifold, we know that it will be unique.

\section{Conclusion and future work}\label{sec:conclusion}
The beta distribution is a natural exponential family, thus admitting a simple expression of the Fisher information metric as the Hessian of the log partition function. Nevertheless, explicit computations are difficult to conduct due to the presence of polygamma functions in the expression of the metric. In the present work, negative sectional curvature of the beta manifold was proved using recent results on ratios of polygamma functions and a lower bound on the determinant of the metric was obtained, that can further used to derive rates of expansion for geodesic balls. Finally, the relationship with canonical moments allows us to better understand them from a geometrical standpoint.

\bibliographystyle{plain}
\bibliography{bibliographie}


\end{document}